\def\Z{{\mathbb Z}}
\def\A{{\mathbb A}}
\def\min{{\rm min}}
\def\SO{{\rm SO}}
\def\PSO{{\rm PSO}}
\def\B{{\mathcal B}}
\def\irr{{\rm irr}}
\def\Span{{\rm Span}}
\def\Vol{{\rm Vol}}
\def\R{{\mathbb R}}
\def\F{{\mathbb F}}
\renewcommand{\L}{\mathcal{L}}
\def\FF{{\mathcal F}}
\def\GG{{\mathcal G}}
\def\RR{{\mathcal R}}
\def\Q{{\mathbb Q}}
\def\Z{{\mathbb Z}}
\def\F{{\mathbb F}}
\def\Q{{\mathbb Q}}
\def\C{{\mathbb C}}
\def\W{{\mathcal W}}
\def\fz1{{F_{\Z,1}}}
\def\SO{{\rm SO}}
\def\PSO{{\rm PSO}}
\def\max{{\rm max}}
\def\dist{{\rm dist}}
\newcommand{\kM}{\mathfrak M}
\newcommand{\km}{\mathfrak m}
\newcommand{\kS}{\mathfrak S}
\newtheorem{theorem}{Theorem}[section]
\newtheorem{lemma}[theorem]{Lemma}
\newtheorem{proposition}[theorem]{Proposition}
\newenvironment{proof}{\noindent {\bf Proof:}}{$\Box$ \vspace{2 ex}}
\title{On the squarefree values of $a^4+b^3$}
\author{Gian Cordana Sanjaya and Xiaoheng Wang}
\begin{document}
\maketitle

\begin{abstract}
    In this article, we prove that the density of integers $a, b$ such that $a^4+b^3$ is squarefree, when ordered by $\max\{|a|^{1/3},|b|^{1/4}\}$, equals the conjectured product of the local densities. 
    
    We show that the same is true for polynomials of the form $\beta a^4 +  \alpha b^3$ for any fixed integers $\alpha$ and $\beta$. We give an exact count for the number of pairs $(a,b)$ of integers with $\max\{|a|^{1/3},|b|^{1/4}\}<X$ such that $\beta a^4 +  \alpha b^3$ is squarefree, with a power-saving error term.
\end{abstract} 

\section{Introduction}

A classical question in analytic number theory is to determine the probability that a given polynomial $F$ with integer coefficients takes squarefree values when evaluated at random integers. The simplest case of one-variable and degree-one asks for the probability that a random integer is squarefree, which is well-known to be $6/\pi^2$. The one-variable degree-two case can also be solved by elementary methods. The one-variable degree-three case was solved by Hooley \cite{Hooley}. For homogeneous polynomials of two variables, the question is known up to degree 6 due to Greaves \cite{Greaves}. For non-homogeneous polynomials of two variables that factor completely into a product of linear factors over some extension of $\Q$, the question is known also up to degree 6 due to Hooley \cite{HooleyP}. Very recently, Kowalski \cite{Kow} proved the case where $F$ is a sum of at least $3$ cubic polynomials in different variables. The cases when $F$ is the discriminant of monic polynomials or when $F$ is the discriminant of general polynomials were solved by Bhargava-Shankar-Wang \cite{BSW_Sqfree,BSW_Sqfree2}. 

Conditionally, Granville \cite{Gra} showed that the one-variable case in general follows from the $abc$-conjecture and a bound on the error term was later obtained by Murty-Pasten \cite{MurPas}. Poonen \cite{Poo} generalized it to the multi-variable case where the variables are growing to infinity one by one.

Very little is known otherwise. In most cases, even the infinitude of squarefree values is open---the most famous example being $a^4+2.$ There is a handful of results on the number of prime values taken by polynomials: Friedlander-Iwaniec \cite{FI} for $a^4+b^2$; Heath-Brown \cite{HB} for $a^3+2b^3$ and its generalization by Heath-Brown-Moroz \cite{HBM} for irreducible binary cubic forms; and Health-Brown-Li \cite{HBL} for $a^2+p^4$ where $p$ is a prime.

In this paper, we consider for the first time the polynomial $a^4+b^3$. Our method in fact allows us to consider all polynomials of the form $\beta a^4+\alpha b^3$ for any fixed integers $\alpha$ and $\beta$. We prove:

\begin{theorem}\label{thm:main}
Let $\alpha$ and $\beta$ be fixed nonzero integers such that $\gcd(\alpha,\beta)$ is squarefree. Let $$N(X;\alpha,\beta) = \#\{(a,b)\in\Z^2\colon \max\{|a|^{1/3},|b|^{1/4}\}<X,\beta a^4+\alpha b^3 \mbox{ is squarefree}\}.$$ For any positive integer $m$, let $\rho_{\alpha,\beta}(m)=\#\{(a,b) \bmod{m} \colon m \mid \beta a^4+\alpha b^3\}$ and let $$C(\alpha,\beta) = \prod_p (1 - \rho_{\alpha,\beta}(p^2)p^{-4}).$$ Then
$$N(X;\alpha,\beta) = C(\alpha,\beta)X^7 + O_\epsilon(X^{6.992+\epsilon}).$$
The implied constant depends on $\alpha$ and $\beta$.
\end{theorem}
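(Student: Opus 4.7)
Write $F(a,b) = \beta a^4 + \alpha b^3$ and apply Möbius inclusion--exclusion: with
$$M_d(X) := \#\{(a,b)\in\Z^2 : |a|<X^3,\ |b|<X^4,\ d^2\mid F(a,b)\},$$
one has $N(X;\alpha,\beta) = \sum_{d\ge 1}\mu(d) M_d(X)$. I would split the sum into three ranges: small ($d\le X^{\omega_1}$), medium ($X^{\omega_1}<d\le X^{\omega_2}$), and large ($d>X^{\omega_2}$), with parameters $\omega_1\le\omega_2$ to be optimized at the end.

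For the small range, the Chinese remainder theorem and direct lattice-point counting give, for squarefree $d$,
$$M_d(X) = \frac{4\rho_{\alpha,\beta}(d^2)}{d^4}\,X^7 + O_\epsilon(d^\epsilon X^4).$$
Summing against $\mu(d)$ and completing the Euler product (using the crude bound $\rho_{\alpha,\beta}(p^2)=O(p^2)$) produces the main term $C(\alpha,\beta) X^7$ with error $O(X^{7-\omega_1+\epsilon}+X^{4+\omega_1+\epsilon})$---the first term from the tail of the Euler product, the second from the accumulated lattice-counting error.

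For the medium range, I would apply a geometric sieve of Ekedahl/Bhargava type. Since $(\partial_a F,\partial_b F)=(4\beta a^3,3\alpha b^2)$, the singular locus of $V(F)\subset\A^2_\Z$ is supported on the origin away from the finite set of primes dividing $6\alpha\beta$. Split the count: the \emph{singular} contribution---$(a,b)$ with $p\mid\gcd(a,b)$ for every $p\mid d$, so $d\mid\gcd(a,b)$ and $p^3\mid F(a,b)$ is automatic---is bounded directly by the lattice count $\#\{d\mid\gcd(a,b)\}\ll X^7/d^2$, summing to $O(X^{7-\omega_1+\epsilon})$; on the smooth locus, Hensel gives $\rho_{\sm}(p^2)=O(p)$, and for $d\le X^{3/2}$ the natural bound yields $M_d^{\sm}(X)\ll X^7/d^{3-\epsilon}$, summing to $O(X^{7-2\omega_1+\epsilon})$. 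Mixed cases are split by factoring $d$ into its smooth and singular parts.

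The main obstacle is the large range. Since $|F(a,b)|=O(X^{12})$ the sum truncates at $d=O(X^6)$, and one must show
$$\sum_{X^{\omega_2}<d\le CX^6} M_d(X) \ll X^{6.992+\epsilon}.$$
The singular subcontribution is again $O(X^{7-\omega_2+\epsilon})$ by the same argument. The hard part is squarefree $d$ whose prime factors all lie on the smooth locus and whose size exceeds $X^{3/2}$, where per-class lattice counting collapses (for $d>X^3$ each residue class mod $d^2$ contributes at most one point to the box, making the naive bound $\rho_{\sm}(d^2)\ll d^{1+\epsilon}$ much too weak when summed). Here I expect the argument to exploit the $E_6$-cuspidal structure of $V(F)$ at the origin---whose local branch admits the explicit parametrization $(a,b)=(\gamma t^3,\delta t^4)$ with $\beta\gamma^4+\alpha\delta^3=0$---in the spirit of Bhargava--Shankar--Wang on squarefree discriminants: one translates the divisibility $d^2\mid F(a,b)$ into an integral-orbit counting problem for a suitable group action (plausibly an $\SL_2$- or $\GL_2$-action whose ring of invariants realizes the $E_6$ singularity), and then bounds the orbit count uniformly in $d$ via equidistribution. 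Balancing $\omega_1,\omega_2$ pins down the final exponent $6.992$; the narrow savings $X^{0.008}$ confirm that this large-range estimate is essentially sharp and is the crux of the argument.
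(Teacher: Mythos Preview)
Your reduction to a tail estimate is correct and matches the paper: M\"obius inversion, elementary lattice counting for small $d$, and for the tail a strong/weak (your singular/smooth) split, with the strong part handled by $\#\{(a,b):d\mid\gcd(a,b)\}\ll X^7/d^2$. One correction on the easy side: your Hensel claim is off by a factor of $p$. There are $\sim p$ smooth $\F_p$-points on $F=0$, each lifting to $p$ classes mod $p^2$, so $\rho_{\sm}(p^2)\sim p^2$ (indeed the paper computes $\rho_{256,-27}(p^2)=2p^2-p$ for $p\ge5$); hence $M_d^{\sm}(X)\ll X^7/d^{2-\epsilon}$, not $d^{3-\epsilon}$. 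Your medium range therefore yields nothing beyond the singular bound, and the paper accordingly uses only two ranges with cutoff $X^{0.1}$. Relatedly, the exponent $6.992$ does not come from optimizing $\omega_1,\omega_2$: it is an absolute constant in the weak-tail bound (the paper's Theorem~1.4(b) reads $O_\epsilon(X^{6.992+\epsilon})+O_\epsilon(X^{7+\epsilon}/M)$ uniformly in $M$), and the subsequent choice of cutoff only makes the remaining terms smaller than this.

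The genuine gap is the large-range sketch. The relevant representation is $G=\PSO(A_0)$ acting on the ten-dimensional space $W$ of symmetric $4\times4$ matrices, with invariant $f_B(x)=\det(A_0x-B)$ a monic quartic --- i.e.\ exactly the BSW setup for $n=4$, not an $\SL_2$- or $\GL_2$-realization of the $E_6$ singularity. The embedding $\sigma_m\colon\W_m^{(2)}\to\frac14W(\Z)$ and the $Q$-invariant argument in the distinguished cusps carry over from BSW and produce the $O(X^{7+\epsilon}/M)$ term. What is genuinely new, and what your plan does not anticipate, is the main-body count: one must count $\Q$-distinguished $B$ in a box of side $\sim X$ whose invariant quartic lies in the codimension-two family $x^4+ax+b$. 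The vanishing of the $x^3$-coefficient is a linear condition on the entries (harmless), but the $x^2$-coefficient is a nondegenerate quadratic form $q$ in nine variables. The paper counts integer zeros of $q$ in the box by the circle method, with an asymptotic uniform over residue classes mod $m$, and then feeds this into a Selberg sieve, using that a positive proportion of $B\in W(\F_p)$ with $f_B\in U(\F_p)$ are \emph{not} $\F_p$-distinguished; this pushes the distinguished count strictly below $X^7$. The final exponent $6.992\approx 7-3/364$ comes from balancing the circle-method/sieve error against the region of the Siegel set where the torus coordinates exceed $X^\delta$. This is not an equidistribution argument, and it is precisely the step where a straight transcription of BSW would stall.
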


The case $\alpha=256$ and $\beta = -27$ is of special importance since $256b^3 - 27a^4$ is the discriminant of the quartic polynomial $x^4 + ax + b.$ An elementary calculation shows that $\rho_{256,-27}(p^2)$ equals $p^3$ for $p=2,3$; and equals $2p^2-p$ for $p\geq 5$. Therefore, we have:

\begin{theorem}\label{thm:main2}
When pairs $(a,b)$ of integers are ordered by $H(a,b) = \max\{|a|^{1/3},|b|^{1/4}\}$, the density of quartic polynomials of the form $x^4+ax+b$ having squarefree discriminant exists and is equal to $$\frac13\prod_{p\geq5}(1 - \frac{2}{p^2} + \frac{1}{p^3})$$ which is approximately $28.03\% $.
\end{theorem}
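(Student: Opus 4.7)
The plan is to apply Theorem \ref{thm:main} directly with $(\alpha, \beta) = (256, -27)$. Since $\gcd(256, -27) = 1$ is squarefree, the hypothesis is satisfied, so the theorem gives
$$N(X; 256, -27) = C(256, -27)\, X^7 + O_\epsilon(X^{6.992 + \epsilon}),$$
and the claim of Theorem \ref{thm:main2} reduces to an explicit evaluation of the Euler product $C(256, -27) = \prod_p (1 - \rho_{256,-27}(p^2)\, p^{-4})$.

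The remaining work is the local density calculation at each prime, which is the content of the paragraph preceding the theorem. At $p = 2$ and $p = 3$, one of the two coefficients of $256 b^3 - 27 a^4$ vanishes modulo $p^2$ (since $4 \mid 256$ and $9 \mid 27$), so the vanishing condition reduces to a one-variable congruence on the surviving variable, producing $\rho(p^2) = p^3$ in each case. At $p \geq 5$, both coefficients are units modulo $p^2$, and I would split into two cases according to whether $p \mid a$. If $p \mid a$, then $a^4 \equiv 0 \pmod{p^2}$ and the equation forces $p \mid b$, contributing $p \cdot p = p^2$ pairs. If $p \nmid a$, the map $(a, b) \mapsto b^3 a^{-4}$ from $(\Z/p^2\Z)^\times \times (\Z/p^2\Z)^\times$ to $(\Z/p^2\Z)^\times$ is a group homomorphism whose image contains both the subgroup of cubes and the subgroup of fourth powers; since $\gcd(3, 4) = 1$ these two subgroups generate the whole group, so the map is surjective and each fiber has size $\phi(p^2) = p^2 - p$, contributing $p^2 - p$ further solutions. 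Summing gives $\rho(p^2) = 2p^2 - p$ as claimed.

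Substituting into the Euler product yields local factors $1/2$ at $p = 2$, $2/3$ at $p = 3$, and $1 - 2/p^2 + 1/p^3$ at $p \geq 5$. Since $(1/2)(2/3) = 1/3$, we obtain
$$C(256, -27) = \frac{1}{3} \prod_{p \geq 5}\left(1 - \frac{2}{p^2} + \frac{1}{p^3}\right),$$
which is precisely the asserted density; a direct numerical evaluation produces the approximation $28.03\%$. The only step requiring any thought is the identification of the image of the cubing-times-fourth-power map at $p \geq 5$, but this is handled cleanly by $\gcd(3, 4) = 1$ and bypasses any case analysis on $p \bmod 3$ or $p \bmod 4$; all of the analytic content is already packaged inside Theorem \ref{thm:main}.
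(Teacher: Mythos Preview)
Your proposal is correct and follows exactly the paper's approach: apply Theorem~\ref{thm:main} with $(\alpha,\beta)=(256,-27)$ and then carry out the ``elementary calculation'' of $\rho_{256,-27}(p^2)$ that the paper asserts but does not detail. Your computation of $\rho(p^2)=p^3$ for $p=2,3$ and $\rho(p^2)=2p^2-p$ for $p\ge 5$ is correct, and the surjectivity argument via $\gcd(3,4)=1$ is a clean way to avoid splitting on residues of $p$.
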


It is easy to see that the Euler product $C(\alpha,\beta)$ gives an upper bound for the desired density, if it exists, by applying the Chinese Remainder Theorem to more and more primes. As is standard in sieve theory, to demonstrate the lower bound, a ``tail estimate'' is required to show that there are not too many pairs $(a,b)$ of integers such that $\beta a^4+\alpha b^3$ is divisible by $m^2$ for some squarefree integer $m$. More precisely, we prove:

\begin{theorem}\label{thm:tailestimate}
Let $\alpha$ and $\beta$ be fixed nonzero integers such that $\gcd(\alpha,\beta)$ is squarefree. For any squarefree integer $m$, let 
$$
N_m(X;\alpha,\beta) = \#\{(a,b)\in\Z^2\colon |a|\leq X^3, |b|\leq X^4, m^2\mid \beta a^4 +\alpha b^3\}.
$$
Then for any positive real number $M$ and $\epsilon >0$,
\begin{equation}\label{eq:final}
\sum_{\substack{m>M\\ m\;\mathrm{ squarefree}
 }} N_m(X;\alpha,\beta) = O_\epsilon\left(\frac{X^{7+\epsilon}}{\sqrt{M}}\right) + O_\epsilon(X^{6.992+\epsilon})
\end{equation}
The implied constants depend on $\alpha$ and $\beta$.
\end{theorem}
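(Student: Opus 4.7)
The plan is to split the sum $\sum_{m > M} N_m(X; \alpha, \beta)$ at a threshold $Y = X^\sigma$ to be optimized, treating small $m$ by direct congruence counting and large $m$ by a geometric argument. The target exponent $6.992 = 7 - 0.008$ will emerge from balancing the two contributions.

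For the small range $M < m \le Y$, I would count $(a,b)$ in the box $[-X^3,X^3] \times [-X^4,X^4]$ via residue classes modulo $m^2$. The set $\{(a,b) \bmod m^2 \colon m^2 \mid \beta a^4 + \alpha b^3\}$ is a union of $\rho_{\alpha,\beta}(m^2)$ classes, and a per-prime analysis yields $\rho_{\alpha,\beta}(m^2) = O_\epsilon(m^{2+\epsilon})$ for squarefree $m$ coprime to $\alpha\beta$. The natural decomposition is into a \emph{singular} stratum (where $p \mid a$ and $p \mid b$ for each prime $p$ in the relevant factor of $m$, contributing $p^2$ classes mod $p^2$) and a \emph{non-singular} stratum (Hensel-liftable from the smooth mod-$p$ curve $\beta a^4 + \alpha b^3 \equiv 0 \pmod{p}$, contributing $p + O(\sqrt p)$ classes by the Weil bound applied to the genus-$3$ curve). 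Carrying through the volume-plus-boundary count in each class and summing $m$ gives a contribution of the form $O_\epsilon(X^{7+\epsilon}/M + X^{4+\epsilon} Y)$.

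For the large range $m > Y$, I would rewrite $\beta a^4 + \alpha b^3 = m^2 c$ with $c \neq 0$ and $|c| \le C X^{12}/m^2$, and bound the number of integer points on each affine curve $\beta a^4 + \alpha b^3 = N$ (for $N = m^2 c$) lying in the box by $O_\epsilon(X^\epsilon)$ uniformly in $N$, via Heath-Brown's determinant method applied to this positive-genus curve. Summing over $m$ and $c$ gives $\sum_{m > Y} N_m = O_\epsilon(X^{12+\epsilon}/Y)$, so that choosing $Y = X^{5.008}$ yields the large-range error $X^{6.992+\epsilon}$.

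The main obstacle is that with $Y = X^{5.008}$ the naive small-range error $X^{4+\epsilon} Y = X^{9.008+\epsilon}$ is far too large, so the small-range bound must be sharpened. The key refinement is to isolate, for each squarefree $m$, the factorization $m = m_1 m_2$ with $m_1$ recording the singular primes (forcing $m_1 \mid \gcd(a,b)$, hence restricting $(a,b)$ to a sublattice of co-volume $m_1^2$) and $m_2$ recording the non-singular primes (where the $\rho_{\alpha,\beta}(m_2^2) \ll m_2^{1+\epsilon}$ Hensel-lifted residue classes are each a single point mod $m_2^2$ and most miss the box once $m_2$ is large). A dyadic sum over the pair $(m_1, m_2)$, combined with the Weil estimate in the non-singular stratum, is expected to collapse the small-range contribution to $O_\epsilon(X^{7+\epsilon}/\sqrt{M} + X^{6.992+\epsilon})$, matching \eqref{eq:final}. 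Executing this stratification-and-summation uniformly in $M$, and verifying that the specific exponent $6.992$ falls out of the balancing, is the principal technical challenge of the proof.
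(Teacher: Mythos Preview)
Your large-range argument rests on a bound that is not known. You claim that the number of integer points on $\beta a^4 + \alpha b^3 = N$ in the box $|a|\le X^3$, $|b|\le X^4$ is $O_\epsilon(X^\epsilon)$ uniformly in $N$, ``via Heath-Brown's determinant method applied to this positive-genus curve.'' The determinant method gives, for an irreducible plane curve of degree $d$, at most $O_{d,\epsilon}(B^{2/d+\epsilon})$ rational points of height $\le B$, uniformly in the curve; Bombieri--Pila gives $O_{d,\epsilon}(N^{1/d+\epsilon})$ integer points in $[-N,N]^2$. Here $d=4$ and the larger coordinate is $\asymp X^4$, so the best available uniform bound is $O_\epsilon(X^{1+\epsilon})$. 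Positive genus does not bring this down to $X^\epsilon$: a uniform $O_\epsilon(N^\epsilon)$ bound on integral points across a family of genus-$3$ curves would amount to a uniform effective Faltings theorem, far beyond current methods. With the true bound $O_\epsilon(X^{1+\epsilon})$, your large-range sum becomes $\gg X^{13+\epsilon}/Y$, forcing $Y\gg X^{6}$; at that point $m^2$ already exceeds $|\beta a^4+\alpha b^3|\ll X^{12}$ and the splitting collapses.

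Your singular/non-singular stratification is exactly the paper's strong/weak divisibility decomposition, and the singular (strong) part is indeed elementary. But the non-singular (weak) part is the entire content of the paper, and it is \emph{not} handled by congruence summation or Weil bounds. The paper instead embeds $\mathcal W_m^{(2)}$ into the representation of $G=\PSO_4$ on $4\times4$ symmetric matrices via a map $\sigma_m$ whose image has $Q$-invariant equal to $m$, and then counts $G(\Z)$-orbits of bounded height: Bhargava's averaging over a Siegel domain, a cusp analysis using the $Q$-invariant to extract the $1/M$ saving in the distinguished cusps, and---in the main body---the circle method for the quadric $q(B)=0$ combined with a Selberg sieve to impose the distinguished condition over $\F_p$. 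The exponent $6.992$ arises from balancing the Selberg-sieve and circle-method errors (specifically $\delta=3/364$ in Theorem~4.1), not from any curve-point count. Your sentence ``a dyadic sum over $(m_1,m_2)$ combined with the Weil estimate is expected to collapse the small-range contribution'' names no mechanism for producing a power saving in the weak part; this is precisely the place where the paper's machinery is indispensable.
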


We note that since the exponents $3$ and $4$ are coprime, it is enough to prove Theorem \ref{thm:tailestimate} for one choice of $\alpha$, $\beta$. Indeed, we have
$$-256\cdot 27\cdot \alpha^8\beta^3(\beta a^4+\alpha b^3) = 256(-3\alpha^3\beta b)^3 - 27(4\alpha^2\beta a)^4,$$
which implies that, $$N_m(X;\alpha,\beta) \leq N_m(c_{\alpha,\beta}X; \,256, -27)$$
for some constant $c_{\alpha,\beta}$ depending only on $\alpha,\beta.$ Hence the power saving bound \eqref{eq:final} for $\alpha=256$ and $\beta = -27$ implies it for all other $\alpha$ and $\beta.$ We simplify notation by writing $\Delta(a,b)$ for $256b^3 - 27a^4$. 

For any prime $p$ and pair $(a,b)$ of integers such that $p^2\mid\Delta(a,b)$, we say $p^2$ \emph{strongly divides} $\Delta(a,b)$ if $p^2\mid \Delta(a',b')$ for any integers $a'\equiv a\pmod{p}$ and $b'\equiv b\pmod{p}$; otherwise, we say $p^2$ \emph{weakly divides} $\Delta(a,b)$. Note in this case, for $p\geq5$, $p^2$ strongly divides $\Delta(a,b)$ if and only if $p\mid a$ and $p\mid b.$ For any squarefree integer $m$, let $\W_m^{(1)}$ (respectively $\W_m^{(2)}$) denote the set of pairs $(a,b)$ of integers such that $p^2$ strongly divides (respectively weakly divides) $\Delta(a,b)$ for every prime $p\mid m$. Then we prove:

\begin{theorem}\label{thm:weak}
For any positive real number $M$ and $\epsilon >0$, 
\begin{eqnarray}
\label{eq:W1}
{\rm (a)}\quad \#\bigcup_{\substack{m>M\\ m\;\mathrm{ squarefree}
 }}\{(a, b)\in\W_{m}^{(1)}\colon H(a, b)<X\} &=& O\Big(\frac{X^7}{M}\Big) + O\Big(X^4 \log X\Big);\\[.075in]
\label{eq:W22}
{\rm (b)}\quad
\#\bigcup_{\substack{m>M\\
m\;\mathrm{ squarefree}
}}\{(a, b)\in\W_{m}^{(2)}\colon H(a, b)<X\} &=& O_\epsilon\Big(X^{6.992+\epsilon}\Big) + O_\epsilon\left(\frac{X^{7+\epsilon}}{M}\right),
\end{eqnarray}
where the implied constants are independent of $M$ and $X$.
\end{theorem}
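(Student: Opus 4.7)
Part (a) follows from a straightforward local analysis. For each prime $p\geq 5$, the Taylor expansion
\[
\Delta(a+p\tilde{a},b+p\tilde{b}) = \Delta(a,b) + p\bigl(768\, b^2\tilde{b} - 108\, a^3\tilde{a}\bigr) + O(p^2)
\]
combined with the definition of strong divisibility shows that $p^2$ strongly divides $\Delta(a,b)$ if and only if $p\mid a$ and $p\mid b$. Thus for squarefree $m$ with $\gcd(m,6)=1$, the set $\W_m^{(1)}$ consists exactly of pairs with $m\mid\gcd(a,b)$, and the primes $2,3$ cost only a bounded factor. Counting lattice points of $m\Z\times m\Z$ in $[-X^3,X^3]\times[-X^4,X^4]$ gives $O(X^7/m^2 + X^4/m + X^3/m + 1)$, where the lower-order terms come from pairs with $a=0$ or $b=0$. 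Since $(a,b)\in\W_m^{(1)}$ with $\Delta(a,b)\neq 0$ forces $m^3\leq|\Delta(a,b)|\ll X^{12}$ (because $m\mid\gcd(a,b)$ implies $m^3\mid\Delta$), summing over squarefree $m\in(M,O(X^4)]$, and treating the curve $\Delta=0$ separately (it contains only $O(X)$ integer points of bounded height), yields the asserted $O(X^7/M) + O(X^4\log X)$.

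For part (b), the key input is a parameterization of the weak locus. For a prime $p\geq 5$ and a pair $(a,b)\in\W_p^{(2)}$, the polynomial $f_{a,b}(x)=x^4+ax+b$ has a unique double root $r$ modulo $p$, with $r\not\equiv 0\pmod{p}$ since $p\nmid\gcd(a,b)$. The direct expansion
\[
(x-r)^2(x^2+2rx+3r^2) = x^4 - 4r^3 x + 3r^4
\]
shows that this double root lifts uniquely to $\Z/p^2\Z$, and that the weak pairs modulo $p^2$ are precisely those of the form $(a,b)\equiv(-4r^3,\,3r^4)\pmod{p^2}$ with $r\in(\Z/p^2\Z)^*$; the map $r\mapsto(-4r^3,3r^4)$ is injective (using $\gcd(3,4)=1$) with image of cardinality $\phi(p^2)=p^2-p$, matching the count of weak classes. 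By the Chinese remainder theorem this description extends to any squarefree $m$ coprime to $6$. Writing $a=-4r^3+m^2 s$ and $b=3r^4+m^2 t$ then reduces part (b) to bounding the number of integer quadruples $(m,r,s,t)$ with $m$ squarefree, $m>M$, $\gcd(r,m)=1$, $0\leq r<m^2$, $|{-4r^3+m^2 s}|<X^3$, and $|{3r^4+m^2 t}|<X^4$; the small primes $2,3$ again cost only a bounded factor.

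The plan is to bound the union over $m>M$ directly, since the naive per-$m$ sum is wasteful for large $m$. For each $m$ the trivial lattice count gives $O\bigl((X^3/m^2+1)(X^4/m^2+1)\bigr)$ pairs $(s,t)$ per $r$; summing over $r\in(\Z/m^2\Z)^*$ yields $O(X^7/m^2 + m^2)$, and the $X^7/m^2$ piece sums over squarefree $m>M$ to the required $O(X^{7+\epsilon}/M)$ contribution. The main obstacle is the $O(m^2)$ per-$m$ term, which cannot be summed naively. A crucial algebraic identity, immediate from the parameterization, is
\[
3ra + 4b = m^2(3rs + 4t),
\]
so $m^2\mid 3ra+4b$ for every weak pair with double-root lift $r$. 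Since $|3ra+4b|\leq 3|r|X^3 + 4X^4$, this is a stringent constraint: when $m$ is large enough it forces $3ra+4b=0$, collapsing $(a,b)$ onto the one-parameter family $b=-3ra/4$, on which $\Delta(a,b)=-27a^3(a+4r^3)$. The strategy is then to split $m$ dyadically and, in the large-$m$ ranges, count integer points on this auxiliary family via a Heath-Brown-type determinant method, while the small-$m$ ranges use the $X^7/m^2$ bound. The principal technical obstacle is making the determinant estimate uniform across dyadic ranges of $m$ so as to yield the explicit exponent $6.992 = 7 - \delta$, together with careful treatment of the degenerate contributions ($r=0$, $\Delta(a,b)=0$, and pairs produced by several choices of $(m,r)$) so that the saving is not washed out.
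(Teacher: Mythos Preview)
Your treatment of part~(a) is correct and matches the paper's one-line justification.

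For part~(b), your route is entirely different from the paper's, and it has a genuine gap. The paper does not use the explicit parameterization $(a,b)\equiv(-4r^3,3r^4)\pmod{m^2}$ at all. Instead it embeds $\W_m^{(2)}$ into the space $W$ of $4\times4$ symmetric matrices via a map $\sigma_m$ landing in the subspace $W_{00}$ with $Q$-invariant equal to $m$, and then counts $G(\Z)$-orbits of bounded height in $\frac14 W(\Z)$. Bhargava's averaging trick reduces this to integrating over a Siegel domain; the distinguished and thick cusps are handled by direct lattice-point estimates exploiting the $Q$-invariant (giving the $O_\epsilon(X^{7+\epsilon}/M)$ term), and the main body is handled by a delta-method/circle-method count of integral points on the quadric $q=0$ inside a box, combined with a Selberg sieve over primes to impose the $\Q$-distinguished condition. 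The exponent $6.992$ arises from balancing the circle-method error against the sieve saving, with a specific choice $\delta=3/364$.

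The gap in your argument is the step ``when $m$ is large enough it forces $3ra+4b=0$''. Your representative $r$ lies in $[0,m^2)$, so $|3ra+4b|$ can be as large as $\tfrac32 m^2 X^3+4X^4$, which is $\gg m^2$ for every $m$; the divisibility $m^2\mid 3ra+4b$ therefore never collapses the count onto the curve $b=-3ra/4$. Equivalently, the ``nearby real double root'' $\rho$ of size $O(X)$ need not be congruent to $r$ modulo $m^2$ once $m\gg X^{1/2}$, and it is precisely the range $X^{1/2}\ll m\ll X^6$ that drives the difficulty. Beyond this, the invocation of a ``Heath-Brown-type determinant method'' is a placeholder rather than an argument: you do not specify which variety the points are shown to lie near, what auxiliary polynomial is built, or why the resulting exponent is $7-0.008$ rather than something worse; indeed you explicitly flag this as ``the principal technical obstacle'' left unresolved. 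As written, the proposal for (b) is a heuristic outline, not a proof, and the central claim it rests on is false.
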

%We note that our method applies also with primes $p>M$ replaced with squarefree $m>M$ in the above above definition for $\W_M^{(1)}$ and $\W_M^{(2)}$, which will allow us to obtain an asymptotic count for the number of pairs $(a,b)$ of integers such that $\Delta(a,b)$ is squarefree with a power-saving error term.

We now briefly describe our methods. Theorem \ref{thm:weak}(a) is immediate with the first term counting the contribution from $a\neq 0$ and the second term counting the contributiom from $a=0$. We devote the rest of the paper to proving Theorem \ref{thm:weak}(b). We follow the strategy of \cite{BSW_Sqfree} to embed $\W_{m}^{(2)}$ into the space $W$ of $4\times 4$ symmetric matrices. More precisely, let $A_0$ denote the $4\times 4$ matrix with $1$'s on the anti-diagonal and $0$'s elsewhere. The group
$G=\PSO(A_0)=\SO(A_0)/\langle\pm I\rangle$ acts on $W$ via the action $g\cdot B=gBg^t$ for $g\in G$
and $B\in W$.  Define the {\it invariant polynomial} of an element
$B\in W$ by $$f_B(x) = \det(A_0x - B).$$ Then $f_B$ is a monic quartic  polynomial. We extend the definition $H(a,b)$ to arbitrary monic quartic polynomials by $$H(x^4+c_1x^3 + c_2x^2 + c_3x + c_4) = \max\{|c_1|,|c_2|^{1/2},|c_3|^{1/3},|c_4|^{1/4}\}.$$ Define the discriminant and height of an element $B\in W$ by the discriminant and height of $f_B$, respectively. We then construct a map $$\sigma_m:\W_{m}^{(2)}\rightarrow \frac14 W(\Z)$$ with $f_{\sigma_m(a,b)} = x^4+ax+b$ as in \cite{BSW_Sqfree}, where $\frac14W(\Z)$ is the lattice of elements $B$ whose coefficients have denominators dividing $4$. 
It thus remains to count $G(\Z)$-orbits in $\frac14 W(\Z)$ that intersect the image of $\sigma_m$ for some squarefree $m>M$, and have height bounded by $X$. 

%The vanishing of the $x^3$-coefficient is a harmless linear condition that cuts out a linear subspace $V$ of dimension $9$. The $x^2$-coefficient, on the other hand, is a quadratic condition on the remaining $9$ coordinates and a naive count gives a bound of $O_\epsilon(X^{7+\epsilon}).$ Any power saving from the condition that we only consider the image of $\sigma_p$ for $p>M$ will be enough.

The space $W$ has several subspaces: $W_{00}$ consisting of $B\in W$ whose $(1,1)$- and $(1,2)$-entries are $0$; $W_{01}$ consisting of $B\in W$ whose $(1,1)$- and $(1,3)$-entries are $0$; and $W_0$ consisting of $B\in W$ whose $(1,1)$-entry is 0. The map $\sigma_p$ in fact lands in $W_{00}$, and so are guaranteed to be \emph{distinguished} in the sense of \cite{SW}. We obtain a bound of $O_\epsilon(X^{7+\epsilon}/M)$ for the distinguished cusps $W_{00}$ and $W_{01}$ and a bound of $O_\epsilon(X^{6+\epsilon})$ for the ``thick'' cusp $W_0$.

The main novelty of this paper is on counting the distinguished orbits in the main body. We use the circle method to handle the condition that the invariant polynomials have vanishing $x^2$-coefficients, combined with the Selberg sieve to impose the distinguished condition to obtain the desired power saving.

We remark that Heath-Brown's result \cite{HBd} on $k$-free values of the polynomial $n^d + c$ specializes to squarefree values of the cubic polynomial $n^3+c$ where $c$ is a constant. A major observation of \cite{HBd} is that counting triples $(n,s,t)$ with $n^3+c= s^2t$ when $n$ and $s$ are large and $c$ is fixed, is akin to counting points close to the projective curve $N^3 = S^2T$. The bigger $c$ is, which in our case can be as big as $n^3$, the worse the estimate gets. As such, we cannot patch the results of \cite{HBd} together to prove Theorem \ref{thm:main}. One does immediately obtain the infinitude of squarefree values.

This paper is organized as follows. In Section \ref{sec:embed}, we set up the embedding into $W$ and collect some results on the invariant theory for the action of $G$ on $W$, which allows us to reduce Theorem \ref{thm:weak}(b) to a result on counting $G(\Z)$-orbits in $\frac14W(\Z)$. In Section \ref{sec:prelim_count}, we apply Bhargava's averaging trick and count in the cusp. In Section \ref{sec:circle_count}, we use the circle method and the Selberg sieve to count in the main body. Finally, in Section \ref{sec:proof}, we show how Theorem \ref{thm:main} and Theorem \ref{thm:tailestimate} follow from Theorem \ref{thm:weak}.

\section{Embedding into the space of $4\times 4$ symmetric matrices}\label{sec:embed}

Let $A_0$ be the $4\times 4$ matrix with $1$'s on the anti-diagonal and $0$'s elsewhere. The group $G=\PSO(A_0)=\SO(A_0)/\langle \pm I\rangle$ acts on the space $W$ of symmetric $4\times 4$ matrices via the action $g\cdot B = gBg^t$ for $g\in G$ and $B\in W$. The ring of polynomial invariants over $\C$ is freely generated by the coefficients of the invariant polynomial $f_B(x) = \det(A_0x - B)$, which is a monic quartic polynomial. Define $G$-invariant discriminant $\Delta(B)$ and height $H(B)$ of an element $B\in W$ by $\Delta(B) = \Delta(f_B)$ and $H(B) = H(f_B)$. We recall some of the arithmetic invariant theory for this representation. See \cite{SW,BSW_Sqfree} for more detail.

\subsection{Invariant theory for the representation $W$ of $G$}\label{sec:invar}

Let $k$ be a field of characteristic not $2$. For any monic quartic polynomial $f(x)\in k[x]$ such that $\Delta(f)\neq 0$, let $C_f$ denote the smooth hyperelliptic curve $y^2 = f(x)$ of genus $1$ and let $J_f$ denote its Jacobian (which is an elliptic curve). The stabilizer in $G(k)$ of an element $B\in W(k)$ with $f_B(x) = f(x)$ is naturally isomorphic to $J_f[2](k)$, which in turn is in bijection with the set of \emph{even factorization} of $f(x)$ over $k$. An even factorization of $f(x)$ over $k$ is an unordered pair $(g(x),h(x))$ of quadratic polynomials with $g(x)h(x)=f(x)$ such that either $g$ and $h$ are both defined over $k$, or they are (defined and) conjugate over a quadratic extension of $k$.

An element $B\in W(k)$ or its $G(k)$-orbit with $\Delta(B)\neq 0$ is said to be: $k$-\emph{soluble} if there exists a nonzero vector $v\in k^4$ such that 
\begin{equation}\label{eq:sol}
v^tA_0v = 0 = v^tBv;
\end{equation}
$k$-\emph{distinguished} if there exist linearly independent vectors $v,w\in k^4$ such that
\begin{equation}\label{eq:dis}
v^tA_0v = v^tBv = w^tA_0w = v^tA_0w = v^tBw = 0.
\end{equation}
Moreover, the set of $k$-lines $\Span(v)$ satisfying \eqref{eq:sol}, if nonempty, is in bijection with $J_{f_B}(k)$; and the set of $k$-flags $\Span(v)\subset\Span(v,w)$ satisfying \eqref{eq:dis}, if nonempty, is in bijection with $J_{f_B}[2](k).$
The set of $k$-soluble orbits with $f_B(x) = f(x)$ is in bijection with $J(k)/2J(k)$. The number of $k$-distinguished orbits with $f_B(x) = f(x)$ is $1$ if $f(x)$ has a linear factor over $k$ or if $f(x)$ admits a factorization of the form $g(x)h(x)$ where $g$ and $h$ are not rational over $k$ but are conjugate over a quadratic extension of $k$; and is $2$ otherwise.

Let $W_{00}$ denote the subspace of $W$ consisting of matrices $B$ whose $(1,1)$- and $(1,2)$-entries are $0$. Let $W_{01}$ denote the subspace of $W$ consisting of matrices $B$ whose $(1,1)$- and $(1,3)$-entries are $0$. Let $W_0$ denote the subspace of $W$ consisting of matrices $B$ whose $(1,1)$-entry is $0$. Then elements in $W_{00}(k)$ or $W_{01}(k)$ with nonzero discriminants are $k$-distinguished and elements in $W_0(k)$ with nonzero discriminants are $k$-soluble. A further polynomial invariant, called the $Q$-invariant, is defined on $W_{00}$ in \cite[\S3.1]{BSW_Sqfree}. For the case of $4\times 4$ matrices $B$, this is simply the $(1,3)$-entry $b_{13}$. The $Q$-invariant has the following important property:

\begin{proposition}\label{prop:Q-inv}
Let $B\in W_{00}(\Q)$ be an element whose invariant polynomial $f_B(x)$ is irreducible over $\Q$ and does not factor as $g(x)h(x)$ where $g$ and $h$ are conjugate over some quadratic extension of $\Q$. If $B'\in W_{00}(\Q)$ is any element that is $G(\Z)$-equivalent to $B$, then the $(1,3)$-entries of $B'$ and $B$ are equal up to sign. If $B'\in W_{01}(\Q)$ is any element that is $G(\Z)$-equivalent to $B$, then the $(1,2)$-entry of $B'$ equals the $(1,3)$-entry of $B$ up to sign.
\end{proposition}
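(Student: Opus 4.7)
The plan is to exploit the uniqueness of the $\Q$-rational flag on a distinguished orbit. By the invariant theory recalled in Section~\ref{sec:invar}, the $\Q$-rational flags $\Span(v)\subset\Span(v,w)$ satisfying \eqref{eq:dis} on a distinguished $G(\Q)$-orbit form a torsor under $J_{f_B}[2](\Q)$, which in turn corresponds bijectively to even factorizations of $f_B$ over $\Q$ (together with the identity). The two hypotheses on $f_B$---irreducibility and the absence of a conjugate quadratic factorization---rule out both kinds of nontrivial rational even factorization, so $J_{f_B}[2](\Q)$ is trivial and each element of such an orbit carries a unique $\Q$-rational flag.

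For the first assertion, both $B$ and $B'\in W_{00}(\Q)$ carry the ``default'' flag $F_0 := (\Span(e_1)\subset\Span(e_1,e_2))$ as their unique $\Q$-rational flag. Writing $B' = gBg^t$ with $g\in G(\Z)$ and passing to the $G$-equivariant correspondence on flags, uniqueness forces $h := g^{-t}$ to stabilize $F_0$. I would then parametrize the $\SO(A_0)$-stabilizer of $F_0$ directly from $h^tA_0h = A_0$: it is a Borel subgroup with diagonal part $\mathrm{diag}(\lambda,\nu,\nu^{-1},\lambda^{-1})$ and one unipotent parameter $b$. A direct computation of the $(1,3)$-entry of $B' = h^{-t}Bh^{-1}$---where $b_{11}=b_{12}=0$ annihilates the potential unipotent cross-terms---yields $(B')_{13} = (\nu/\lambda)\,b_{13}$, and integrality of both $g$ and $g^{-1}$ forces $\lambda,\nu \in \{\pm 1\}$, giving $(B')_{13} = \pm b_{13}$.

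The second assertion proceeds by the same recipe: $B' \in W_{01}(\Q)$ has unique flag $F_0' := (\Span(e_1)\subset\Span(e_1,e_3))$, so $h$ must send $F_0\mapsto F_0'$; parametrizing such $h$ from the same isotropy relations and computing the $(1,2)$-entry of $B'$ produces $(B')_{12} = \pm b_{13}$ after imposing integrality. One subtlety worth flagging is that $F_0$ and $F_0'$ lie in \emph{different} rulings of the isotropic Grassmannian for $A_0$, which $G = \PSO(A_0)$ does not exchange, so the second case may in fact be vacuous; the Borel computation is robust either way.

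The principal obstacle will be the $4\times 4$ bookkeeping: I must check carefully that the vanishing conditions defining $W_{00}$ (resp.\ $W_{01}$) on $B$ and on $B'$, combined with the flag-stabilization of $h$, really do cancel every parasitic term and leave only the clean $\pm b_{13}$ contribution. A minor additional check is that passing from $\SO(A_0)$ to the quotient $G = \SO(A_0)/\langle\pm I\rangle$ does not introduce any sign ambiguity beyond the one already recorded.
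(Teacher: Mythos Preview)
Your approach via flag uniqueness is exactly the paper's, and the $W_{00}$ computation is correct in outline. The point you label a ``minor additional check'' is, however, the one real subtlety: an element $g\in G(\Z)=\PSO(A_0)(\Z)$ need \emph{not} lift to $\SO(A_0)(\Z)$, so you cannot directly invoke ``integrality of $g$ and $g^{-1}$'' to force $\lambda,\nu\in\{\pm1\}$. The paper resolves this by exhibiting $\gamma_0=\mathrm{diag}(i,i,-i,-i)\in\SO(A_0)(\Z[i])$ and proving that every $\gamma\in G(\Z)$ lifts either to some $\widetilde{\gamma}\in\SO(A_0)(\Z)$ or to $\gamma_0\widetilde{\gamma}$ with $\widetilde{\gamma}\in\SO(A_0)(\Z)$. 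In the second case the diagonal part of the lift of $g^t$ acquires an overall factor of $i$, but since $\gamma_0$ scales $e_1$ and $e_2$ by the \emph{same} unit, this factor cancels in the ratio $\lambda/\nu$ governing $(B')_{13}$, and your conclusion survives. You should make this explicit rather than defer it.

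Your ruling observation for the $W_{01}$ case is sharp and in fact correct: $\Span(e_1,e_2)$ and $\Span(e_1,e_3)$ meet in a line and hence lie in opposite families of $A_0$-Lagrangians, which $\SO(A_0)(\C)$---and therefore $\PSO(A_0)(\C)$ and any subgroup such as $G(\Z)$---preserves. Under the hypotheses on $f_B$ the unique $\Q$-flag for $B$ has Lagrangian $\Span(e_1,e_2)$, so no $G(\Z)$-translate of $B$ can have $\Span(e_1,e_3)$ as its flag's Lagrangian; the second assertion is thus vacuously true. The paper carries out the $W_{01}$ entry computation without remarking on this, but your route---simply establishing vacuity---is cleaner and suffices.
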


\begin{proof} We prove the statement for $B'\in W_{01}(\Q)$. The statement for $W_{00}(\Q)$ follows by a similar argument (see also \cite[Proposition 3.1]{BSW_Sqfree}).

Let $\{e_1,e_2,e_3,e_4\}$ denote the standard basis for $\Q^4.$ Let $\gamma_0$ be the element of $\SO(A_0)(\Z[i])$ defined by $$\gamma_0(e_1) = ie_1,\quad \gamma_0(e_2)=ie_2,\quad \gamma_0(e_3) = -ie_3,\quad \gamma_0(e_4) = -ie_4,$$
where $i=\sqrt{-1}$ is a root to $x^2+1=0$. Then any $\gamma\in\PSO(A_0)(\Z)$ can either be lifted to some $\widetilde{\gamma}\in\SO(A_0)(\Z)$ or to $\gamma_0\widetilde{\gamma}\in\SO(A_0)(\Z[i])$ for some $\widetilde{\gamma}\in\SO(A_0)(\Z)$. 

Suppose $B'=\gamma B \gamma^t$ for some $\gamma\in \PSO(A_0)(\Z)$. Since $\gamma_0$ only scales $e_1$ and $e_3$, we see that there is some $\widetilde{\gamma}\in\SO(A_0)(\Z)$ such that $\Span(\widetilde{\gamma}^te_1) \subset \Span(\widetilde{\gamma}^te_1, \widetilde{\gamma}^te_3)$ is a flag satisfying \eqref{eq:dis} for $B$, and $\gamma$ lifts either to $\widetilde{\gamma}$ or to $\gamma_0\widetilde{\gamma}$. The assumption on $f_B$ implies that the flag $\Span(\widetilde{\gamma}^te_1) \subset \Span(\widetilde{\gamma}^te_1, \widetilde{\gamma}^te_3)$ coincides with the flag $\Span(e_1)\subset\Span(e_1,e_2).$ Hence, there are integers $\alpha_1,\alpha_2,\alpha_3$ such that
\begin{eqnarray*}
\widetilde{\gamma}^te_1 &=& \alpha_1 e_1,\\
\widetilde{\gamma}^te_3 &=& \alpha_2e_2 + \alpha_3e_1.
\end{eqnarray*}
Since $\widetilde{\gamma}\in\SO(A_0)(\Z)$, we must then have
\begin{eqnarray*}
\widetilde{\gamma}^te_4 &=& \alpha_1^{-1} e_4 - \alpha_3\alpha_1^{-1}\alpha_2^{-1}e_3,\\
\widetilde{\gamma}^te_2 &=& \alpha_2^{-1}e_3,
\end{eqnarray*}
with $\alpha_1=\pm1$ and $\alpha_2=\pm1.$ The $(1,2)$-entry $b'_{12}$ of $B'$ is then either $(\widetilde{\gamma}^te_1)^tB(\widetilde{\gamma}^te_2)$ or $(i\widetilde{\gamma}^te_1)^tB(i\widetilde{\gamma}^te_2)$. In both cases, we have $b'_{12}=\pm \alpha_1\alpha_2^{-1}e_1^tBe_3 = \pm b_{13}$.
\end{proof}

Let $U\simeq\A^2\backslash\{\Delta=0\}$ be the space of monic quartic polynomials of the form $x^4 + ax + b$ with nonzero discriminant. The next result shows that the number of elements of $U(\Z)$ failing the condition of Proposition \ref{prop:Q-inv} is negligible.

\begin{proposition}\label{prop:red}
The number of elements $f\in U(\Z)$ with $H(f)<X$ such that $f(x)$ is either reducible over $\Q$ or factors as $g(x)h(x)$ where $g$ and $h$ are conjugate over some quadratic extension of $\Q$ is $O(X^4\log X)$.
\end{proposition}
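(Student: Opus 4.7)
To prove Proposition~\ref{prop:red}, my plan is to bound two (overlapping) families of $f \in U(\Z)$ whose union contains every $f$ satisfying the hypothesis: the family of $f$ having a $\Q$-rational root, and the family of $f = x^4 + ax + b$ whose resolvent cubic $R(y) = y^3 - 4by - a^2$ has a rational root. A standard Galois-theoretic computation shows that $f$ factors as a product of two monic quadratic polynomials over some (possibly trivial) quadratic extension of $\Q$ if and only if $R$ has a rational root, so this union contains every reducible $f$ together with every $f$ factoring as a conjugate pair of quadratics over a proper quadratic extension.

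The first family is easy: by the rational root theorem, any rational root of $f$ is an integer $r$, and the bound $|r|^4 \leq |a| \cdot |r| + |b|$ forces $|r| \leq 2X$. For each such $r \neq 0$, the identity $b = -r^4 - ar$ forces $r \mid b$, so for each of the $O(X^4/|r|)$ values of $b \in r\Z$ with $|b| \leq X^4$, the value of $a$ is determined (and we check $|a| \leq X^3$ a posteriori). Summing over $r$ gives $O(X^4 \log X)$, and the case $r = 0$ forces $b = 0$ and contributes only $O(X^3)$.

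For the second family, since $R$ is monic with integer coefficients, a rational root is an integer $y_0$ dividing $a^2$. The case $y_0 = 0$ forces $a = 0$ and contributes $O(X^4)$ pairs. For $y_0 \neq 0$, I would write $y_0 = d m^2$ with $d$ a squarefree integer and $m \geq 1$, use the divisibility $y_0 \mid a^2$ to conclude that $dm \mid a$, and substitute $a = dmc$ to obtain $b = d(dm^4 - c^2)/4$. Under the constraints $|dmc| \leq X^3$ and $|d(dm^4 - c^2)| \leq 4X^4$, I would split the sum over admissible triples $(d, m, c)$ according to whether $|c|^2$ is much smaller than, comparable to, or much larger than $dm^4$; in each regime the relevant double sum (essentially of the form $\sum_d \sum_m 1/(d^{3/2} m^2)$ for the middle regime, or $\sum_d X^3/d$ with $d$ squarefree for the extreme ones) is bounded by $O(X^4 \log X)$. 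Since $R$ has at most three integer roots, each pair $(a, b)$ is counted at most three times in this way.

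The main obstacle I anticipate is the middle regime of the second family, where $|c|^2$ is comparable to $dm^4$ and the quantity $|dm^4 - c^2|$ can be nearly zero: a naive count that ignores the divisibility $y_0 \mid a^2$ would overshoot to $O(X^5)$, so the substitution $a = dmc$ forced by this divisibility is essential to save the full factor of $X$. It is also worth checking that the integrality requirement on $b = d(dm^4 - c^2)/4$ does not disrupt any of the bounds, but since each constraint can only decrease the count, this is harmless for an upper bound.
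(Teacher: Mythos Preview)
Your proposal is correct, and the treatment of the rational-root family is essentially the same as the paper's (the paper fixes $b$ and sums the divisor function, you fix $r$ and count multiples of $r$, which amounts to the same thing).

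For the second family your route is genuinely different. The paper splits into two explicit cases: a product $(x^2+cx+d)(x^2-cx+e)$ of rational quadratics, handled via $de=b$ and the divisor function on $b$, and a product of conjugate quadratics over $\Q(\sqrt{d})$, handled via the identity $e_1e_2d=a$ and the triple-divisor function on $a$. You instead observe that both cases are subsumed by the resolvent cubic $R(y)=y^3-4by-a^2$ having an integer root, and then count triples $(d,m,c)$ parametrizing $(y_0,a,b)$. This unification is pleasant, and your regime-splitting does go through (the middle regime yields the convergent sum $X^4\sum_{d,m} d^{-3/2}m^{-2}=O(X^4)$, the extreme regimes yield $O(X^3\log X)$ each, and the case $d<0$ is similar). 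That said, once you have written down that any integer root $y_0\neq 0$ must divide $a^2$ and that $b$ is then determined by $4by_0=y_0^3-a^2$, you can skip the $(d,m,c)$ machinery entirely: for each $a\neq 0$ there are at most $d(a^2)$ choices of $y_0$, so the total over $|a|\leq X^3$ is $\sum_{|a|\leq X^3} d(a^2)=O(X^3\log^3 X)$, which together with the $O(X^4)$ from $a=0$ already gives what you need. This shortcut is closer in spirit to the paper's third case, where the factorization $a=e_1e_2d$ plays the same role.
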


\begin{proof}
Throughout this proof, we use repeatedly the classical result that the sum $\sum_{|n|<X}d(n)$ of the divisor function is $O(X\log X)$ and that the sum $\sum_{|n|<X}\tau_3(n)$ of the triple-divisor function is $O(X\log^2X).$ See for example \cite[\S3.5]{Apo}.

Suppose first $f(x) = x^4 + ax + b$ has a linear factor $x-r$ over $\Q$. Then there are $O(X^3)$ of them with $b=0$. Suppose now $b\neq 0$. Then since $r\mid b$, we get $O(X^4\log X)$ choices for the pair $(r,b)$, which uniquely determines $a$. Hence, there are $O(X^4\log X)$ such $f(x)$ with a linear factor.

Next we consider the case where $f(x) = x^4 + ax + b$ does not have a linear factor but factors as $(x^2 + cx + d)(x^2 - cx + e)$ over $\Q$. Then $de = b\neq 0$, which gives us $O(X^4\log X)$ choices for the triple $(d,e,b)$. Comparing the $x^2$-coefficients gives $c^2 = d + e$, and so $c$ is determined given $d$ and $e$. Comparing the $x$-coefficients then uniquely determines $a$. Hence, there are $O(X^4\log X)$ such $f(x)$ that factors as a product of two irreducible quadratic polynomials.

Finally, we consider the case where $f(x) = x^4 + ax + b$ is irreducible over $\Q$ but factors as $$(x^2 + e_1\sqrt{d}x + \frac{c_2 + e_2\sqrt{d}}{2})(x^2 - e_1\sqrt{d}x + \frac{c_2 - e_2\sqrt{d}}{2})$$ over the ring of integers in $\Q(\sqrt{d})$ for some $d$. If $a=0$, then we have $O(X^4)$ choices for $b$. Suppose now $a\neq 0$. Comparing the $x$-coefficients gives $e_1e_2d = a$. Hence there are $O(X^3\log^2 X)$ choices for the tuple $(e_1,e_2,d,a)$. Comparing the $x^2$-coefficients gives $c_2 -e_1^2d=0$, and so $c_2$ is determined given $e_1$ and $d$. Comparing the constant terms then uniquely determines $b$. Hence, there are $O(X^3\log^2 X)$ such $f(x)$ that factors into conjugate quadratic polynomials over some quadratic extension of~$\Q$. 
\end{proof}

We end this section with a bound on distinguished elements over finite fields, which will be used in the Selberg sieve in Section \ref{sec:circle_count}.

\begin{proposition}\label{prop:F_p-dist}
Let $p\geq 7$ be a prime. Then the number $d_p$ of elements $B\in W(\F_p)$ with $f_B\in U(\F_p)$ and are not $\F_p$-distinguished satisfies $$\frac{1}{16}p^8+O(p^7) \leq d_p \leq \frac{3}{4}p^8 + O(p^7).$$
\end{proposition}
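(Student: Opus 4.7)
The plan is to group $d_p$ by the invariant polynomial $f\in U(\F_p)$ and exploit the arithmetic invariant theory recalled in Section~\ref{sec:invar}. For each $f\in U(\F_p)$, every $B\in W(\F_p)$ with $f_B=f$ has $G(\F_p)$-stabilizer of order $|J_f[2](\F_p)|$; since $G=\PSO(A_0)$ is connected and the fiber $W_f$ is a single geometric orbit, Lang's theorem gives $|W_f(\F_p)|=|G(\F_p)|$, and this fiber breaks into exactly $|J_f[2](\F_p)|$ $G(\F_p)$-orbits, each of size $|G(\F_p)|/|J_f[2](\F_p)|$. Writing $n_{\mathrm{dist}}(f)\in\{1,2\}$ for the number of $\F_p$-distinguished orbits among these, we obtain
\[
d_p \;=\; |G(\F_p)| \sum_{f\in U(\F_p)} \Bigl(1 - \frac{n_{\mathrm{dist}}(f)}{|J_f[2](\F_p)|}\Bigr).
\]

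I would next tabulate $n_{\mathrm{dist}}(f)$ and $|J_f[2](\F_p)|$ by the factorization type of $f$ over $\F_p$ (fully split, $1^2\!\cdot\!2$, $2\!\cdot\!2$, $1\!\cdot\!3$, irreducible). By the bijection between nontrivial elements of $J_f[2](\F_p)$ and Frobenius-stable partitions of the four roots into two pairs, the orders of $J_f[2](\F_p)$ are $4,2,4,1,2$ respectively. Crucially, $n_{\mathrm{dist}}(f)=1$ in every case: in the irreducible case the Frobenius $4$-cycle fixes exactly the partition $\{\alpha,\alpha^{p^2}\}\{\alpha^p,\alpha^{p^3}\}$, which exhibits $f$ as a product of two conjugate quadratics over $\F_{p^2}$. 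In each case $n_{\mathrm{dist}}(f)/|J_f[2](\F_p)|\ge 1/4$, so combined with $|U(\F_p)|\le p^2$ and the identification $\PSO(A_0)\simeq\PGL_2\times\PGL_2$ over $\F_p$ (giving $|G(\F_p)|=p^2(p^2-1)^2$) this yields the upper bound $d_p\le \tfrac34 p^8+O(p^7)$.

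For the lower bound, I would restrict to irreducible $f\in U(\F_p)$, where the non-distinguished proportion is exactly $1/2$. Each such $f$ is the minimal polynomial of some $\alpha\in\F_{p^4}\setminus\F_{p^2}$, and $c_1(f)=c_2(f)=0$ translates, via Newton's identities (using $p\neq 2$), into $\Tr_{\F_{p^4}/\F_p}(\alpha)=\Tr_{\F_{p^4}/\F_p}(\alpha^2)=0$. The first condition carves out a $3$-dimensional $\F_p$-subspace $V\subset\F_{p^4}$, and the restriction to $V$ of the quadratic form $\alpha\mapsto\Tr(\alpha^2)$ is non-degenerate (using $p\ne 2$); thus it has $p^2+O(p)$ zeros. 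Subtracting the $O(1)$ solutions lying in $\F_{p^2}$ and dividing by $4$ for the Frobenius orbit yields $N_e=\tfrac14 p^2+O(p)$ irreducible $f$. Hence $d_p\ge \tfrac12|G(\F_p)|N_e=\tfrac18 p^8+O(p^7)$, comfortably above the claimed $\tfrac1{16}p^8$. The main technical input is the non-degeneracy of $\alpha\mapsto\Tr(\alpha^2)$ on $V=\ker(\Tr)$: this follows from non-degeneracy of the trace form on $\F_{p^4}/\F_p$ for $p\neq 2$ together with the observation that $\F_p\cap\ker(\Tr)=\{0\}$ (since $\Tr$ acts as multiplication by $4$ on $\F_p$); everything else reduces to Galois-orbit bookkeeping and a standard orthogonal-group cardinality computation.
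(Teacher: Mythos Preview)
Your argument is correct in outline and does prove the stated bounds, with two small slips. First, $n_{\mathrm{dist}}(f)$ is not $1$ in every case: when $f$ is a product of two distinct irreducible quadratics over $\F_p$ (the $2\!\cdot\!2$ type, which does occur in $U(\F_p)$), Section~\ref{sec:invar} gives $n_{\mathrm{dist}}(f)=2$. This does no harm to your upper bound, since then $n_{\mathrm{dist}}(f)/|J_f[2](\F_p)|=2/4\ge 1/4$ anyway. Second, the solutions lying in $\F_{p^2}$ number $O(p)$, not $O(1)$, because $V\cap\F_{p^2}=\ker(\Tr_{\F_{p^2}/\F_p})$ is one-dimensional over $\F_p$; again harmless for the final estimate.

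Your route to the lower bound is genuinely different from the paper's. The paper writes down the explicit two-parameter family
\[
g_{a,b}(x)=(x-a)(x-b)\bigl(x^2+(a+b)x+(a^2+ab+b^2)\bigr)\in U(\F_p),
\]
checks directly that at least $p(p-5)/4$ of these have nonzero discriminant, and observes that each (having a linear factor, hence $n_{\mathrm{dist}}=1$ and $|J_f[2](\F_p)|\ge 2$) contributes at least one non-distinguished orbit of size $\ge|G(\F_p)|/4$, yielding $d_p\ge\tfrac{1}{16}p^8+O(p^7)$. Your approach---counting irreducible $f\in U(\F_p)$ via the non-degenerate quadratic form $\alpha\mapsto\Tr(\alpha^2)$ on $\ker(\Tr)\subset\F_{p^4}$---is more structural and in fact gives the sharper constant $\tfrac18$; the paper's is completely elementary and sidesteps any point count on a quadric.
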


\begin{proof}
Over the finite field $\F_p$, every orbit is $\F_p$-soluble. Moreover, for any monic quartic polynomial $f(x)\in\F_p[x]$ with nonzero discriminant, the number $\#J_f(\F_p)/2J_f(\F_p)$ of $\F_p$-orbits with invariant polynomial $f$ equals the size $\#J_f[2](\F_p)$ of any stabilizer with invariant polynomial $f$. Hence, the number of $B\in W(\F_p)$ with $f_B = f$ equals $\#G(\F_p)=p^2(p^2-1)^2.$ There are $p^2 + O(p)$ polynomials $f\in U(\F_p)$ and so a total of $p^8 + O(p^7)$ elements $B\in W(\F_p)$ with $f_B\in U(\F_p)$. Moreover, for any $f\in U(\F_p)$, there is at least one $\F_p$-distinguished orbit with stabilizer having size at most $4$. Hence, we have the upper bound $d_p \leq \frac34p^8+O(p^7)$.

Consider next quartic polynomials of the form $$g_{a,b}(x):=(x-a)(x-b)(x^2 + (a+b)x + (a^2 + ab + b^2))\in U(\F_p).$$
Since $g_{a,b}(x)$ has a linear factor, there is only one distinguished orbit with invariant $g_{a,b}$. Moreover, we have $2\leq \#J_{g_{a,b}}[2](\F_p)\leq 4$. Hence, there is at least one non-distinguished orbit of size at least $|G(\F_p)|/4.$ It remains to count the number of such $g_{a,b}(x)$ with nonzero discriminant, which is equivalent to requiring that $a\neq b$, that $a$ is not a root of the quadratic factor, and that the quadratic factor has nonzero discriminant. In other words, we have $a\neq b$, $3(a+b/3)^2 + (2/3)b^2\neq0$ and $3(a+b/3)^2 + (8/3)b^2\neq0$. Given any $b$, there are at least $p-5$ choices for $a$. Finally, given any $g_{a,b}$ with nonzero discriminant, we see that $g_{a,b} = g_{a',b'}$ if and only if $x^2 + (a+b)x + (a^2+ab+b^2)=(x-a')(x-b')$ or $(x-a)(x-b)=(x-a')(x-b')$, as any other possibility contradicts $\Delta(g_{a,b})\neq0$. Hence, there are at least $p(p-5)/4$ quartic polynomials with nonzero discriminant of the form $g_{a,b}$ for some $a,b\in\F_p.$ Therefore, we have at least $\frac{1}{16}p^8+O(p^7)$ non-distinguished elements $B$ in $W(\F_p)$ with $f_B\in U(\F_p).$ 
\end{proof}

\subsection{Embedding $\W_m^{(2)}$ into $\frac14W(\Z)$}

In light of Proposition \ref{prop:red}, it is sufficient to prove Theorem \ref{thm:weak} with $\W_m^{(2)}$ replaced by the set $\W_m^{(2),\irr}$ of pairs $(a,b)\in\W_m^{(2)}$ such that $f_{a,b}(x):=x^4+ax+b$ is irreducible and does not factor into a product of quadratic polynomials conjugate over some quadratic extension of $\Q$. We prove some preliminary results in order to use the map $\sigma_m$ defined in \cite[\S3.2]{BSW_Sqfree}.

Fix $(a,b)\in\W_m^{(2)}$ and fix any prime $p\mid m$. For any $(a',b')\in\Z^2$ with $a'\equiv a\pmod{p}$ and $b'\equiv b\pmod{p}$, we have $f_{a',b'}(x)\equiv f_{a,b}(x)\pmod{p}$. Since $p^2$ weakly divides $\Delta(a,b)$, we see that $f_{a,b}(x)$ has a unique double root mod $p$. Let $r\in \Z$ be an integer such that $f_{a,b}(x+r)=x^4 + b_1x^3 + b_2x^2 + b_3x + b_4$ with $p\mid b_3$ and $p\mid b_4.$ We claim that $p^2\mid b_4$. Note the discriminant of a quartic polynomial is of the form
$$\Delta(x^4 + b_1x^3 + b_2x^2 + b_3x + b_4) = b_4\Delta'(b_1,b_2,b_3,b_4) + b_3^2\Delta(x^3 + b_1x^2 + b_2x + b_3)$$
where $\Delta'$ is some polynomial with integer coefficients. Suppose for a contradiction that $p^2\nmid b_4$. Then, since $p^2\mid\Delta(f_{a,b}(x+r))$, we have $p\mid \Delta'(b_1,b_2,b_3,b_4).$ Hence, $p^2\mid \Delta(g(x))$ for any monic quartic polynomial $g(x)$ congruent to $f_{a,b}(x+r)$. Now for any $(a',b')\in\Z^2$ with $a'\equiv a\pmod{p}$ and $b'\equiv b\pmod{p}$, we have $f_{a',b'}(x+r)\equiv f_{a,b}(x+r)\pmod{p}$ and so $p^2\mid \Delta(f_{a',b'}(x+r))$. Since $\Delta(f_{a',b'}(x+r)) = \Delta(a',b')$, this contradicts the assumption that $p^2$ weakly divides $\Delta(a,b).$

By the Chinese Remainder Theorem, there exists an integer $r$ such that $f_{a,b}(x+r) = x^4 + c_1x^3 + c_2x^2 + mc_3x + m^2c_4$ for some integers $c_1,c_2,c_3,c_4$. Consider the following matrix:
$$B(c_1,c_2,c_3,c_4) = 
\begin{pmatrix}
0&0&m&0\\
0&1&-c_1/2&0\\
m&-c_1/2&c_1^2/4-c_2&-c_3/2\\
0&0&-c_3/2&-c_4
\end{pmatrix}.$$
A direct computation shows that $f_{B(c_1,c_2,c_3,c_4)} = x^4 + c_1x^3 + c_2x^2 + mc_3x + m^2c_4.$ We now set $\sigma_m(a,b) = B(c_1,c_2,c_3,c_4)+rA_0\in\frac14W(\Z).$ Then $f_{\sigma_m(a,b)} = f_{a,b}(x)$. Note in fact that the image of $\sigma_m$ lies inside $W_{00}(\Q)$ and the $(1,3)$-entry of any element in the image of $\sigma_m$ is $m$. We summarize the above in the following theorem.

\begin{theorem}\label{thm:lift}
Let $m$ be any squarefree integer. There is a map $\sigma_m:\W_m^{(2)}\rightarrow \frac14W(\Z)$ such that $f_{\sigma_m(a,b)} = f_{a,b}(x)$ for any $(a,b)\in\W_m^{(2)}$. Moreover, the $(1,3)$-entry (respectively the $(1,2)$-entry) of any element in $W_{00}(\Q)$ (respectively $W_{01}(\Q)$) that is $G(\Z)$-equivalent to some element in $\sigma_m(\W_m^{(2),\irr})$ equals $p$ in absolute value. 
\end{theorem}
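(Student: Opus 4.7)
My plan is to package as a theorem the construction already carried out in the discussion immediately preceding the statement, and then to derive the assertion about the $(1,3)$- and $(1,2)$-entries as a direct corollary of Proposition \ref{prop:Q-inv}. Essentially all the hard work for the first half has been done above; what remains is to write down $\sigma_m(a,b)$ explicitly and verify that the invariant polynomial comes out right.

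For the existence of the map, given $(a,b)\in\W_m^{(2)}$, I would first produce, for each prime $p\mid m$, an integer $r_p$ such that the translated polynomial $f_{a,b}(x+r_p)=x^4+b_1x^3+b_2x^2+b_3x+b_4$ satisfies $p\mid b_3$ and $p^2\mid b_4$. The discussion above provides exactly this: the weak-divisibility hypothesis forces $f_{a,b}$ to have a unique double root modulo $p$ (take $r_p$ to be any lift of it), and the identity
$$\Delta(x^4+b_1x^3+b_2x^2+b_3x+b_4) = b_4\Delta'(b_1,b_2,b_3,b_4) + b_3^2\Delta(x^3+b_1x^2+b_2x+b_3)$$
upgrades $p\mid b_4$ to $p^2\mid b_4$, since otherwise $p^2$ would divide $\Delta(a',b')$ for every $(a',b')\equiv(a,b)\pmod p$, contradicting weakness. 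Applying the Chinese Remainder Theorem to the $r_p$ then yields an integer $r$ with
$$f_{a,b}(x+r) = x^4 + c_1x^3 + c_2x^2 + mc_3x + m^2c_4,$$
and I set $\sigma_m(a,b) := B(c_1,c_2,c_3,c_4) + rA_0 \in \tfrac14 W(\Z)$, where $B(c_1,c_2,c_3,c_4)$ is the matrix displayed above. Expanding the $4\times 4$ determinant $\det(A_0x-B(c_1,c_2,c_3,c_4))$ gives $f_{B(c_1,c_2,c_3,c_4)}(x)=x^4+c_1x^3+c_2x^2+mc_3x+m^2c_4$, and since $f_{B+rA_0}(x)=\det(A_0(x-r)-B)=f_B(x-r)$, one obtains $f_{\sigma_m(a,b)}(x)=f_{a,b}(x)$ as required.

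For the final assertion, observe that by construction $\sigma_m(a,b)\in W_{00}(\Q)$ and its $(1,3)$-entry equals $m$. If moreover $(a,b)\in\W_m^{(2),\irr}$, then $f_{a,b}$ is irreducible over $\Q$ and does not factor into two quadratic polynomials conjugate over a quadratic extension of $\Q$ --- precisely the hypothesis of Proposition \ref{prop:Q-inv} applied to $B=\sigma_m(a,b)$. The proposition then gives immediately that the $(1,3)$-entry of any $G(\Z)$-equivalent $B'\in W_{00}(\Q)$ is $\pm m$, and likewise that the $(1,2)$-entry of any $G(\Z)$-equivalent $B'\in W_{01}(\Q)$ is $\pm m$. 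The only substantive input --- the rigidity of the $Q$-invariant under the $G(\Z)$-action --- has already been handled in Proposition \ref{prop:Q-inv}, so the main obstacle (which was really the weak-to-strong upgrade $p\mid b_4 \Rightarrow p^2\mid b_4$) is already resolved, and what remains beyond the routine determinant is purely bookkeeping.
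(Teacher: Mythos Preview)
Your proposal is correct and follows precisely the paper's approach: the paper presents the entire construction (the choice of $r_p$ via the unique double root mod $p$, the upgrade $p\mid b_4\Rightarrow p^2\mid b_4$ from weak divisibility, the CRT step, the explicit matrix $B(c_1,c_2,c_3,c_4)+rA_0$) in the paragraphs immediately preceding the theorem and then states Theorem~\ref{thm:lift} as a summary, with the second assertion obtained by applying Proposition~\ref{prop:Q-inv} exactly as you do. Your write-up even catches that the entry in question has absolute value $m$ (the ``$p$'' in the theorem statement is a typo).
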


\section{Averaging and counting in the cusp}\label{sec:prelim_count}

Let $\L_M$ denote the set of elements in $\frac14W(\Z)$ that are $G(\Z)$-equivalent to some elements in $\sigma_m(\W_{m,M}^{(2),\irr})$ for some prime $p>M$. Write $N(\L_M, X)$ for the number of $G(\Z)$-orbits in $\L_M$ having height at most $X$. Theorem \ref{thm:lift} implies that Theorem \ref{thm:weak} follows from the following bound on $N(\L_M, X)$.

\begin{theorem}\label{thm:count}
For any positive real number $M$ and any $\epsilon >0$, we have
\begin{equation}\label{eq:countN}
N(\L_M, X) = O_\epsilon\Big(X^{6.992+\epsilon}\Big) + O_\epsilon\left(\frac{X^{7+\epsilon}}{M}\right).
\end{equation}
\end{theorem}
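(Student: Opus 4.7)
The plan is to express $N(\L_M, X)$ via Bhargava's averaging trick as a weighted count of lattice points in $\frac14 W(\Z)$ lying in a $G(\R)$-translate of a fundamental domain for $G(\Z)$ acting on the height-at-most-$X$ real locus of $W$. Following the strategy of \cite{BSW_Sqfree, SW}, this domain splits into a \emph{main body} in which all matrix entries are comparable to their generic scale, together with three nested cuspidal regions controlled by the distinguished subspaces $W_0 \supset W_{01} \supset W_{00}$. I would bound the contribution from each piece separately and combine.

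For the \emph{distinguished cusps} $W_{00}$ and $W_{01}$, the essential input is Theorem~\ref{thm:lift} together with Proposition~\ref{prop:Q-inv}: every $G(\Z)$-orbit in $\L_M$ having a representative in $W_{00}(\Q)$ or $W_{01}(\Q)$ has its $(1,3)$-entry, respectively $(1,2)$-entry, equal to $\pm m$ for some squarefree $m > M$. Counting such orbits thus reduces to a lattice point count in a seven-dimensional affine subspace of $W$ in which one coordinate is forced to have absolute value at least $M$, and imposing the two vanishing-coefficient conditions on $f_B$ coming from the shape $x^4+ax+b$ produces the bound $O_\epsilon(X^{7+\epsilon}/M)$. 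For the \emph{thick cusp} $W_0$, the $Q$-invariant restriction is unavailable, but the vanishing of the $(1,1)$-entry together with the two coefficient constraints on $f_B$ forces enough coordinates to be small that a direct count gives $O_\epsilon(X^{6+\epsilon})$, which is absorbed in the $X^{6.992+\epsilon}$ error.

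The \emph{main body} estimate is the novel and most delicate part. After Bhargava's averaging, one must bound the number of $B \in \frac14 W(\Z)$ in the main-body region whose invariant polynomial has the form $x^4 + ax + b$ and whose $G(\Q)$-orbit is $\Q$-distinguished, as forced by Theorem~\ref{thm:lift}. The vanishing of the $x^3$-coefficient of $f_B$ is a linear condition on the entries of $B$, cheaply cutting down the count by one dimension. The vanishing of the $x^2$-coefficient is a quadratic condition, which I would detect via the Hardy--Littlewood circle method (or a $\delta$-method variant). The distinguished condition is then imposed by a Selberg upper-bound sieve: any $\Q$-distinguished $B$ is automatically $\F_p$-distinguished for every prime $p \geq 7$ of good reduction, so sieving with the set of non-distinguished residues modulo primes $p \leq Y$ produces a multiplicative saving, because by Proposition~\ref{prop:F_p-dist} the non-distinguished density is bounded below by $\tfrac{1}{16} + O(1/p)$. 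Balancing the sieve length $Y$ against the minor-arc loss in the circle method should yield the power saving reflected in the exponent $6.992$.

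The principal obstacle is precisely this interplay between the circle method and the Selberg sieve in a seven-dimensional expanding region: one needs exponential sum bounds for the quadratic trace condition that remain uniform under the congruence restrictions imposed by the sieve, together with control of the fundamental-domain cutoff. Calibrating the sieve parameter so that the aggregate saving beats both the sieve's own loss and the minor-arc contribution is the main quantitative task, and the precise numerical exponent $0.008 = 7 - 6.992$ reflects the resulting optimization. By comparison, the cusp estimates are essentially bookkeeping, relying on Proposition~\ref{prop:Q-inv} and elementary lattice-point counts on subspaces cut out by the vanishing of a few coefficients of $f_B$.
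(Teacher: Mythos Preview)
Your proposal is correct and follows essentially the same approach as the paper: Bhargava's averaging over a Siegel domain, the four-way split into main body, thick cusp $W_0$, and distinguished cusps $W_{00}$, $W_{01}$, with the $Q$-invariant (Theorem~\ref{thm:lift}/Proposition~\ref{prop:Q-inv}) giving $O_\epsilon(X^{7+\epsilon}/M)$ on the distinguished cusps, a direct count giving $O_\epsilon(X^{6+\epsilon})$ on the thick cusp, and the circle method for the quadratic $x^2$-coefficient condition combined with a Selberg sieve (via Proposition~\ref{prop:F_p-dist}) on the main body. The only point you leave implicit is that the paper first peels off the portion of the main body where one torus coordinate exceeds $X^\delta$ by the trivial bound $O_\epsilon(X^{7-\delta+\epsilon})$, so that the circle-method box has all side lengths within $[X^{1-2\delta}, X^{1+2\delta}]$; the final exponent comes from optimizing $\delta$ against the sieve parameter.
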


In Section \ref{sec:prelimsetup}, we recall the set up in \cite{SW} for counting $G(\Z)$-orbits in $\frac14W(\Z)$ and divide up a fundamental domain $\FF$ for the left-multiplication action of $G(\Z)$ on $G(\R)$ into the main body, the thick cusp, and the distinguished cusps. In Section \ref{sec:prelimcusp}, we obtain bounds for the contribution from the thick cusp and the distinguished cusps. Finally in Section \ref{sec:circle_count}, we obtain bounds for the contribution from the main body and complete the proof of Theorem \ref{thm:count}.

\subsection{Counting $G(\Z)$-orbits in $\frac14W(\Z)$}\label{sec:prelimsetup}

The counting problem for the representation $W$ of $G$ is studied in \cite{SW}. In this section, we recall some of the set up and results of \cite{SW}.

Let $R$ be a fundamental domain for the action of $G(\R)$ on the elements of $W(\R)$ having nonzero discriminant and height bounded by $1$ as constructed in \cite[\S4.1]{SW}. Let $\FF$ be a fundamental set for the left-multiplication action of $G(\Z)$ on $G(\R)$ obtained using the Iwasawa decomposition of $G(\R)$. More explicitly, we have
$$
G(\R)=N(\R)TK,
$$ where $N$ is a unipotent group consisting of lower triangular matrices, $K$ is compact, and $T$ is the split torus of $G$ given by
\begin{equation*}
T=
\left\{\left(\begin{array}{cccc}
 t_1^{-1}&&&\\
 & t_2^{-1} &&\\
 &&t_2 &\\
 &&&t_1  
\end{array}\right)\right\}.
\end{equation*}
We also make the following change of variables: set $$s_1 = t_1/t_2,\quad s_2 = t_1t_2.$$
We denote an element of $T$ with coordinates $t_i$
(resp.\ $s_i$) by $(t)$ (resp.\ $(s)$). We may take $\FF$ to be contained in a {\it Siegel set}, i.e., contained in $N'T'K$,
where $N'$ consists of elements in $N(\R)$ whose entries are
absolutely bounded and $T'\subset T$ consists of elements in $(s)\in
T$ with $s_1\geq c$ and $s_2\geq c$ for some positive constant $c$.

For any $h\in G(\R)$, since $\FF h$ remains a fundamental domain for
the action of $G(\Z)$ on $G(\R)$, the set $(\FF h)\cdot (XR)$ (when
viewed as a multiset) is a finite cover of a fundamental domain for
the action of $G(\Z)$ on the elements in $W(\R)$ with nonzero
discriminant and height bounded by $X$. The degree of the cover
depends only on the size of stabilizer in $G(\R)$ and is thus
absolutely bounded by $4$. The presence of these stabilizers is
in fact the reason we consider $(\FF h) \cdot (XR)$ as a multiset. Hence,
we have
\begin{equation}\label{eqoddfundv}
  N(\L_M,X)\ll \#\big\{\big((\FF h)\cdot (XR)\big)\cap\L_M\big\}.
\end{equation}
Let $\GG_1$ be a compact left $K$-invariant set in $G(\R)$ which is the
closure of a nonempty open set.
Averaging \eqref{eqoddfundv} over $h\in \GG_1$ and exchanging the
order of integration as in \cite[Theorem 2.5]{BS2}, we obtain
\begin{equation}
  N(\L_M,X)\ll \int_{\gamma\in\FF} \#\big\{\big((\gamma \GG_1)\cdot (XR)\big)\cap\L_M\big\}
  d\gamma,
\end{equation}
where the implied constant depends only on $\GG_1$ and $R$, and where $d\gamma$ is a Haar measure on $G(\R)$ given by
$$
d\gamma=dn\,s_1^{-1}s_2^{-1}d^\times s\,dk,
$$ where $dn$ is a Haar measure on the unipotent group $N(\R)$,
$dk$ is a Haar measure on the compact group~$K$, and $d^\times s=s_1^{-1}ds_1\,s_2^{-1}ds_2$ is the standard Haar measure on $\mathbb{G}_m^2$ (see \cite[(20)]{SW}).

Since $s_i\geq c$ for every $i$, there exists a compact subset $N''$ of
$N(\R)$ containing $(t)^{-1}N'\,(t)$ for all $t\in T'$. Since $N''$, $K$, $\GG_1$ are compact and $R$ is bounded, the set $E=N''K\GG_1R$ is bounded. Then we have
\begin{equation}\label{eq:REe}
N(\L_M,X)\ll \int_{s_i\gg 1} \#\big\{\big((s)\cdot XE\big)\cap\L_M\big\}s_1^{-1}s_2^{-1}d^\times s.
\end{equation}

We denote the coordinates of $W$ by $b_{ij}$, for $1\leq
i\leq j\leq 4$, and we denote the $T$-weight of a coordinate $\alpha$ on $W$ by $w(\alpha)$.
We compute the weights of the coordinates $b_{ij}$ to be
$$
\begin{array}{rclrclrclrcl}
w(b_{11}) &=& s_1^{-1}s_2^{-1},& w(b_{12}) &=& s_2^{-1},& w(b_{13}) &=& s_1^{-1},& w(b_{14}) &=& 1,\\
&&&w(b_{22}) &=& s_1s_2^{-1},& w(b_{23}) &=& 1,& w(b_{24}) &=& s_1, \\
&&&&&&w(b_{33}) &=& s_1^{-1}s_2,& w(b_{34}) &=& s_2,\\
&&&&&&&&&w(b_{44}) &=& s_1s_2.\\
\end{array}
$$
Then the $(i,j)$-entry of any $B\in (s)\cdot XE$ is bounded by $c_0Xw(b_{ij})$, where $c_0>0$ is a constant depending only on $\GG_1$ and $R$.

We define two distinguished cusps: $T_{00}\subset T'$ consisting of elements $(s)$ such that $c_0Xw(b_{11}) < 1/4$ and $c_0Xw(b_{12})<1/4$; and $T_{01}\subset T'$ consisting of elements $(s)$ such that $c_0Xw(b_{11}) < 1/4$ and $c_0Xw(b_{13})<1/4$. We define the thick cusp $T_0$ to be the subset of $T'$ consisting of elements $(s)$ such that $c_0Xw(b_{11}) < 1/4$,  $c_0Xw(b_{12})\geq1/4$, and $c_0Xw(b_{13})\geq1/4$. We define the main body $T''$ to be the complement $T'\backslash(T_{00}\cup T_{01}\cup T_0).$ Then for any $(s)\in T_{00}$, we have $\big((s)\cdot XE\big) \cap \frac14W(\Z)\subset W_{00}(\Q)$; for any $(s)\in T_{01}$, we have $\big((s)\cdot XE\big) \cap \frac14W(\Z)\subset W_{01}(\Q)$; and for any $(s)\in T_0$. we have $\big((s)\cdot XE\big) \cap \frac14W(\Z)\subset W_{0}(\Q)$.

Since elements in $\L_M$ have invariant polynomials in $U$, we express the conditions of the invariant polynomial having vanishing $x^3$- and $x^2$-coefficients in terms of the coordinates $b_{ij}$. The $x^3$-coefficient is the anti-trace, and so we have
$$b_{23} = -b_{14}.$$
After replacing $b_{23}$ by $-b_{14}$, we see that the $x^2$-coefficient is the following quadratic form:
$$q(b_{ij}) := -b_{11}b_{44} - b_{22}b_{33} - 2b_{12}b_{34} - 2b_{13}b_{24} - 2b_{14}^2.$$

\subsection{Counting in the cusps}\label{sec:prelimcusp}

In this section, we compute the contribution to \eqref{eq:REe} for $(s)\in T_{00}$, $(s)\in T_{01}$ and for $(s)\in T_0$.

\begin{proposition}\label{prop:cusp}
For any positive real number $M$ and $\epsilon >0$, we have
\begin{eqnarray}
\label{eq:distc0}\int_{(s)\in T_{00}} \#\big\{\big((s)\cdot XE\big)\cap\L_M\big\}\,s_1^{-1}s_2^{-1}d^\times s &=& O_\epsilon\Big(\frac{X^{7+\epsilon}}{M}\Big),\\
\label{eq:distc1}\int_{(s)\in T_{01}} \#\big\{\big((s)\cdot XE\big)\cap\L_M\big\}\,s_1^{-1}s_2^{-1}d^\times s &=& O_\epsilon\Big(\frac{X^{7+\epsilon}}{M}\Big),\\
\label{eq:thickc}\int_{(s)\in T_{0}} \#\big\{\big((s)\cdot XE\big)\cap\L_M\big\}\,s_1^{-1}s_2^{-1}d^\times s &=& O_\epsilon\Big(X^{6+\epsilon}\Big).
\end{eqnarray}
\end{proposition}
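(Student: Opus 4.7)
The plan is to follow the cusp-counting strategy of \cite{BSW_Sqfree, SW}. For each of the three cusps I would bound the lattice count $\#\big(((s)\cdot XE)\cap \L_M\big)$ by a product of weights over those coordinates $b_{ij}$ not forced to vanish by the cusp inequality $c_0 X w(b_{ij}) < 1/4$, impose the invariant-polynomial conditions $b_{23}=-b_{14}$ and $q(b_{ij}) = 0$, and then integrate against $s_1^{-2}s_2^{-2}ds_1\, ds_2$. The key savings come from two sources: a divisor bound applied to the equation $q = 0$ (worth roughly $X^{2-\epsilon}$ over the trivial bound on the constrained pair), and---crucially, in the two distinguished cusps---the size constraint $|b_{13}| > M$ or $|b_{12}| > M$ supplied by Theorem~\ref{thm:lift}.

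For \eqref{eq:distc0}, the cusp inequalities force $b_{11} = b_{12} = 0$, so $B \in W_{00}(\Q)$, and Theorem~\ref{thm:lift} together with $|b_{13}| \ll X/s_1$ restricts $s_1 \ll X/M$. In $W_{00}$ the equation $q = 0$ reads $b_{22}b_{33} = -2b_{13}b_{24} - 2b_{14}^2$, so I would fix $b_{13}, b_{14}, b_{24}$ and apply the divisor bound to count pairs $(b_{22},b_{33})$; with $b_{34}, b_{44}$ free, this gives $\ll X^{5+\epsilon} s_1 s_2^2$ lattice points. Integrating $X^{5+\epsilon} s_1^{-1}$ over $c \leq s_1 \ll X/M$ and $X \ll s_2 \ll X s_1$ (and treating the deeper sub-cusp $s_2 \gg Xs_1$, where $b_{22}$ is also $0$, separately by divisor-bounding the residual equation $b_{13}b_{24} + b_{14}^2 = 0$) yields $O_\epsilon(X^{7+\epsilon}/M)$. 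The argument for \eqref{eq:distc1} is entirely symmetric under the swap $(b_{12}, s_2) \leftrightarrow (b_{13}, s_1)$. For \eqref{eq:thickc}, only $b_{11} = 0$ is forced and Theorem~\ref{thm:lift} yields no cusp savings; however, the box is now larger in only one further direction, and the same divisor bound applied to the full $q = 0$ equation $b_{22}b_{33} = -2b_{12}b_{34} - 2b_{13}b_{24} - 2b_{14}^2$ gives $\ll X^{6+\epsilon} s_1 s_2$ lattice points, which integrates to $O_\epsilon(X^{6+\epsilon})$ over the thick-cusp region $s_1, s_2 \ll X$, $s_1 s_2 \gg X$.

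The main obstacle is the treatment of degenerate sub-loci on which the divisor bound fails---i.e., where the right-hand side of the $q=0$ equation vanishes, or where one of $b_{22}, b_{33}$ is forced to zero by descending deeper into the cusp. Each such sub-locus is of lower dimension and in principle yields to a further divisor-style argument (for example, the locus $b_{13}b_{24} + b_{14}^2 = 0$ becomes a rank-two divisor problem after slicing on $b_{14}$, and the fully degenerate stratum $b_{14} = b_{24} = 0$ is controlled by the lower bound $|b_{13}| > M$ directly), but verifying that each such stratum contributes within the stated error is the chief bookkeeping burden. The estimate \eqref{eq:thickc} is the most delicate of the three since no $1/M$ savings is available to absorb slack there.
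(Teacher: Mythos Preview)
Your overall strategy matches the paper's proof: bound the lattice count via a divisor bound on the equation $q=0$, exploit the $Q$-invariant constraint $|b_{13}|>M$ (resp.\ $|b_{12}|>M$) from Theorem~\ref{thm:lift} in the distinguished cusps to restrict $s_1$ (resp.\ $s_2$), and integrate. The main-term computation $\#\{\cdots\}\ll X^{5+\epsilon}s_1s_2^2$ in $T_{00}$, the symmetric treatment of $T_{01}$, and the thick-cusp bound $X^{6+\epsilon}s_1s_2$ all agree with the paper, as does your identification of the degenerate sub-loci (where the right-hand side of the divisor equation vanishes) as the residual bookkeeping; the paper handles these explicitly and they contribute only lower-order terms.

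There is, however, one genuine gap in your treatment of the ``deeper sub-cusp'' $s_2\gg Xs_1$ inside $T_{00}$, where $b_{22}$ is forced to zero. Your proposed divisor bound on $b_{13}b_{24}+b_{14}^2=0$ yields a count of order $X^{4+\epsilon}s_2^3$ (since $b_{33},b_{34},b_{44}$ remain free with weights $s_1^{-1}s_2,\,s_2,\,s_1s_2$), and integrating this against $s_1^{-2}s_2^{-2}\,ds_1\,ds_2$ over $s_2\in[Xs_1,\infty)$ \emph{diverges}: nothing in your argument bounds $s_2$ from above. The paper resolves this not by estimation but by the observation that whenever $b_{11}=b_{12}=b_{22}=0$ the invariant polynomial is a perfect square---indeed $f_B(x)=\big((x-b_{14})(x-b_{23})-b_{13}b_{24}\big)^2$---so $\Delta(B)=0$ and this region contributes nothing to $\L_M$. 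This is precisely what supplies the cutoff $s_2\ll Xs_1$ that you already invoke for the main term; once noted, the deeper sub-cusp is simply empty and needs no separate estimate. The analogous observation, with $s_1$ and $s_2$ swapped, is needed for $T_{01}$; no such issue arises in $T_0$, since there the defining inequalities $c_0Xw(b_{12}),\,c_0Xw(b_{13})\geq 1/4$ already force $s_1,s_2\ll X$.
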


\begin{proof}
Consider first the distinguished cusp $T_{00}$. In this case, any element in $\big((s)\cdot XE\big)\cap \L_M$ is an element in $W_{00}(\Q)$ that is $G(\Z)$-equivalent to some element in $\sigma_p(\W_{p,M}^{(2),\irr})$ for some $p>M$. Hence, by Theorem \ref{thm:lift}, we have $|b_{13}|>M$ for any element $B \in \big((s)\cdot XE\big)\cap \L_M$. In other words, we have $Xs_1^{-1}\gg M.$ Moreover, note that if $B\in W$ with $b_{11}=b_{12}=b_{13}=0$ or $b_{11}=b_{12}=b_{22}=0$, then $\Delta(B) = 0$. Hence we may assume that $Xs_1s_2^{-1}\gg 1$. Let $T'_{00}$ denote the subset of $T_{00}$ consisting of elements $(s)$ with $Xs_1^{-1}\gg M$ and $Xs_1s_2^{-1}\gg 1$. Note we also have $s_1\ll X$ and $s_2\ll X^2$ for $(s)\in T'_{00}$.

The quadratic form $q(b_{ij})$ when restricted to $W_{00}$ simplifies to $q_1(b_{ij}) = -b_{22}b_{33}-2b_{13}b_{24}-2b_{14}^2.$ Hence, we have
\begin{eqnarray*}
\#\big((s)\cdot XE \cap \L_M\big) &\ll_\epsilon& \big((Xw(b_{14}))^{1+\epsilon}(Xw(b_{22}) + Xw(b_{33})) + (Xw(b_{14})Xw(b_{13})Xw(b_{24}))^{1+\epsilon}\big)Xw(b_{34})Xw(b_{44})\\
&\ll& X^{4+\epsilon}s_1^2s_2 + X^{4+\epsilon}s_2^3 + X^{5+\epsilon}s_1s_2^2\\
&\ll&X^{4+\epsilon}s_1^2s_2+X^{5+\epsilon}s_1s_2^2.
\end{eqnarray*}
Integrating these two terms separately gives
\begin{eqnarray*}
\int_{(s)\in T'_{00}} X^{4+\epsilon}s_1^2s_2s_1^{-1}s_2^{-1}d^\times s &=& \int_{(s)\in T'_{00}} X^{4+\epsilon}s_1d^\times s \quad\!\ll\quad\! \frac{X^{5+\epsilon}\log X}{M},\\
\int_{(s)\in T'_{00}} X^{5+\epsilon}s_1s_2^2s_1^{-1}s_2^{-1}d^\times s &=& \int_{(s)\in T'_{00}} X^{5+\epsilon} s_2d^\times s \quad\!\ll\quad\!\int_{(s)\in T'_{00}} X^{6+\epsilon} s_1d^\times s \quad\!\ll\quad\! \frac{X^{7+\epsilon}\log X}{M}.
\end{eqnarray*}
The integral over the other distinguished cusp $T_{01}$ has the same bound via the same analysis with $s_1$ and $s_2$ switched.

Finally, we consider the thick cusp $T_0$. In this case, we have $Xs_1^{-1}\gg 1$ and $Xs_2^{-1}\gg 1$. The quadratic form $q(b_{ij})$ when restricted to $W_0$ simplifies to $q_2(b_{ij})=- b_{22}b_{33} - 2b_{12}b_{34} - 2b_{13}b_{24} - 2b_{14}^2.$ The above analysis shows that the number of choices for $(b_{22},b_{33},b_{13},b_{24},b_{14})$ such that $q_1(b_{ij})=0$ is $O_\epsilon(X^{2+\epsilon}s_1s_2^{-1} + X^{2+\epsilon}s_1^{-1}s_2 + X^{3+\epsilon})$. Multiplying it by $(Xw(b_{12})+Xw(b_{34}))Xw(b_{44})$ gives a bound of $O_\epsilon(X^{4+\epsilon}s_1^2s_2 + X^{4+\epsilon}s_2^3 + X^{5+\epsilon}s_1s_2^2)$ for the number of $B\in \big((s)\cdot XE\big)\cap\L_M$ with $q_1(b_{ij})=0$. The contribution from $q_1(b_{ij})\neq 0$ is $$O_\epsilon\big((Xw(b_{22})Xw(b_{33})Xw(b_{13})Xw(b_{24})Xw(b_{14}))^{1+\epsilon}Xw(b_{44})\big)=O_\epsilon(X^{6+\epsilon}s_1s_2).$$ Using the bound $s_1\ll X$ and $s_2\ll X$, we have
$$\#\big\{\big((s)\cdot XE\big) \cap \L_M\big\} \ll_\epsilon X^{6+\epsilon}s_1s_2.$$
Multiplying by $s_1^{-1}s_2^{-1}$ and integrating then give the desired bound \eqref{eq:thickc}.
\end{proof}

For the main body $T''$, we have $Xs_1^{-1}s_2^{-1}\gg 1.$ Since both $s_1$ and $s_2$ are bounded below by some absolute constant, we still have the bound $s_1\ll X$ and $s_2\ll X$. The above analysis gives a bound of 
\begin{eqnarray*}
&&O_\epsilon\Big(\big((X^{2+\epsilon}s_1s_2^{-1} + X^{2+\epsilon}s_1^{-1}s_2 + X^{3+\epsilon})(Xw(b_{12})+Xw(b_{34}))+X^{5+\epsilon}\big)(Xw(b_{11})+Xw(b_{44}))\Big)\\
&=& O_\epsilon\Big((X^{3+\epsilon}s_1^{-1}s_2^2 + X^{4+\epsilon}s_2+X^{5+\epsilon})Xs_1s_2\Big)\\
&=& O_\epsilon\Big(X^{6+\epsilon}s_1s_2\Big)
\end{eqnarray*}
for the number of $B\in \big((s)\cdot XE\big)\cap \L_M$ with $q_2(b_{ij})=0$. Multiplying by $s_1^{-1}s_2^{-1}$ and integrating give a bound of $O_\epsilon(X^{6+\epsilon})$. It remains to consider the contribution to the main body integral from the number of $B\in \big((s)\cdot XE\big)\cap \L_M$ with $q_2(b_{ij})\neq 0$. We have a trivial bound of $O_\epsilon(X^{7+\epsilon})$ for the number of such $B$. 

For any positive real number $\delta$, let $T''_\delta$ denote the subset of $T''$ where $s_1\gg X^{\delta}$ or $s_2\gg X^{\delta}$. Then we have
\begin{equation}\label{eq:triv}
\int_{(s)\in T''_\delta} \#\big\{\big((s)\cdot XE\big)\cap \L_M\big\}\, s_1^{-1}s_2^{-1}d^\times s = O_\epsilon(X^{7-\delta+\epsilon}).
\end{equation}

Therefore, it remains to consider the main body integral under the additional assumption that $s_1\ll X^{\delta}$ and $s_2\ll X^{\delta}$ where $\delta$ is some small enough positive real number.
\section{Counting in the main body using the circle method}\label{sec:circle_count}

In this section, we consider the contribution to \eqref{eq:REe} from the main body under the additional assumption that $s_1\ll X^{\delta}$ and $s_2\ll X^{\delta}$. Let $V\simeq\A^9$ denote the subspace of $W$ cut out by $b_{14}=-b_{23}$. Since scaling an element $B\in V(\Q)$ by $4$ does not affect the vanishing of $q(B)$ or whether it is $\Q$-distinguished, it is enough to count points in a box in $V(\Z)$ defined by $|b_{ij}|\leq 4c_0Xw(b_{ij})$. The assumption on $s_1$ and $s_2$ implies that $4c_0Xw(b_{ij})=O(X^{1+2\delta})$ for all $i,j$. The goal of this section is to prove the following theorem:

\begin{theorem}\label{thm:circlemain}
Let $\delta < 0.01$ be a positive real number. Let $\B$ be a box in $V(\R)$ defined by $|b_{ij}|\leq X_{ij}$ for $(i,j)=(1,1),$ $(1,2),$ $(1,3),$ $(1,4),$ $(2,2),$ $(2,4),$ $(3,3),$ $(3,4),$ $(4,4)$ where $X_{ij}$ are real numbers satisfying $c_1^{-1}X^{1-2\delta}\leq X_{ij}\leq c_1X^{1+2\delta}$, $X_{14}=c_2 X$ and
$$X_{11} X_{44} = X_{22} X_{33} = X_{12} X_{34} = X_{13} X_{24} = c_2^2 X^2,$$ for some positive constants $c_1,c_2.$
Let $N^\dist_q(\B)$ denote the number of $\Q$-distinguished elements $B\in \B\cap V(\Z)$ with $q(B)=0$% such that $B$ is nonzero modulo any odd prime $p$
. Then
\begin{equation}\label{eq:circlemain}
N_q^\dist(\B) = O_\epsilon\left(X^{\frac{209}{30} + \frac{137}{45}\delta + \epsilon}\right).
\end{equation}
\end{theorem}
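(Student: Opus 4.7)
The plan is to combine the circle method on the quadratic constraint $q(B)=0$ with a Selberg sieve that imposes the distinguished condition, as sketched in the introduction. Here $q$ is a non-degenerate quadratic form in the $9$ coordinates of $V$, and on the box $\B$ one has $|q(B)|=O(X^{2+4\delta})$, so a delta-symbol circle method with moduli up to $Q=X^{1+O(\delta)}$ suffices to detect $q(B)=0$. The expected count of integer points on $\{q=0\}\cap\B$ is of order $X^7$, and the role of the sieve is to shave off a polynomial factor using Proposition~\ref{prop:F_p-dist}.

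First I would open $\mathbf{1}_{q(B)=0}$ via a smooth $\delta$-symbol (for example Heath--Brown's), representing it as an average of additive characters $e(a\,q(B)/c)$ over moduli $c\leq Q$. By Proposition~\ref{prop:F_p-dist}, for every prime $p\geq 7$ the non-distinguished locus $\Omega_p\subset V(\F_p)$ has density at least $\tfrac{1}{16}+O(1/p)$, so every $\Q$-distinguished $B$ satisfies $B\bmod p\notin\Omega_p$ for all such $p$. Applying the Selberg upper-bound sieve with squarefree sieve level $D$, I would bound the indicator of $\Q$-distinguished by $\bigl(\sum_{d\mid\nu(B)}\lambda_d\bigr)^{2}$, where $\nu(B)$ is the product of primes $p\leq z$ for which $B\in\Omega_p\bmod p$. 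Expanding the square and interchanging sums,
$$N_q^{\dist}(\B)\;\ll\;\sum_{d_1,d_2\leq D}|\lambda_{d_1}\lambda_{d_2}|\sum_{c\leq Q}\frac{1}{cQ}\sum_{a\bmod c}^{*}|S_{c,a,[d_1,d_2]}(\B)|,$$
where $S_{c,a,d}(\B)=\sum_{B}e(a\,q(B)/c)$ is restricted to $B\in\B\cap V(\Z)$ satisfying $B\bmod p\in\Omega_p$ for every $p\mid d$.

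Next I would estimate $S_{c,a,d}(\B)$ by Poisson summation coordinate-by-coordinate. The main subtlety is that $\B$ is a skew box: the side lengths $X_{ij}$ lie in $[c_1^{-1}X^{1-2\delta},c_1X^{1+2\delta}]$, and the product constraints $X_{11}X_{44}=X_{22}X_{33}=X_{12}X_{34}=X_{13}X_{24}=c_2^2X^2$ pair the variables along the hyperbolic pairings of $q$. This alignment is exactly what keeps the dual Gauss sums controlled: the form on the dual lattice has the same hyperbolic structure, and the dual box has commensurate shape. The output should be a level-of-distribution estimate $|S_{c,a,d}(\B)|\ll X^{\alpha}(cd)^{\beta}$ for explicit $\alpha,\beta$, uniform in $c\leq Q$ and $d\leq D$.

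Finally I would plug this back into the Selberg--circle-method expansion. The main term produces $X^7$ times $\bigl(\sum_{d\leq D}\mu^2(d)/h(d)\bigr)^{-1}$ for a density function $h$ built from Proposition~\ref{prop:F_p-dist}, giving polynomial savings $D^{-\eta}$, while the error grows polynomially in $QD$; balancing $D$, $Q$ and the Poisson truncation against these errors and against the $X^{1+O(\delta)}$-factor should produce the exponent $\tfrac{209}{30}+\tfrac{137}{45}\delta$. The hard part will be the skew-box Poisson estimate: because the side lengths vary across powers of $X$, naive Weyl-differencing or Cauchy--Schwarz loses the savings, and one must use the explicit hyperbolic structure of $q$ to arrange that the dual exponential sum has the shape required for the sieve to close.
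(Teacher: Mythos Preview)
Your overall strategy---detect $q(B)=0$ by the circle method and impose distinguishedness via a Selberg sieve with densities supplied by Proposition~\ref{prop:F_p-dist}---is exactly the paper's. The organization and the technical tools differ. The paper first proves, as a separate statement (Theorem~\ref{thm:circlep}), an asymptotic with power-saving error for $N_q(\B;m,B_0)$, the count of $B\in\B\cap V(\Z)$ with $q(B)=0$ and $B\equiv B_0\pmod m$, using the classical major/minor arc dissection; on the minor arcs it uses Weyl differencing (squaring $|S_\B(\alpha;m,B_0)|$ and exploiting the explicit bilinear form of $q$) rather than the $\delta$-symbol plus Poisson summation you propose. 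Only afterward is the Selberg sieve applied, summing this asymptotic over the $\prod_{p\mid m}d_p$ non-$\F_p$-reducible residue classes. This separation means the sieve modulus $m$ enters the circle method only as a fixed residue class $B_0+mV(\Z)$, so the combined-modulus skew-box Poisson analysis that you flag as ``the hard part'' never arises; the price is that the error term in Theorem~\ref{thm:circlep} must be made uniform in $m$, which the Weyl-differencing argument delivers directly.

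One genuine slip to correct: your displayed inequality with $|\lambda_{d_1}\lambda_{d_2}|$ and $|S_{c,a,[d_1,d_2]}(\B)|$ cannot hold as written, since taking absolute values of the sieve weights throws away the cancellation that produces the factor $H^{-1}\ll D^{-1/2+\epsilon}$ you invoke in the final paragraph. You must keep the signed $\lambda_d$ through the main term (coming from an asymptotic for the congruence-restricted count) and take absolute values only on the remainder terms, as in the standard Selberg setup and as the paper does.
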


Multiplying the bound \eqref{eq:circlemain} by $s_1^{-1}s_2^{-1}$ and integrating over $1\ll s_1,s_2\ll X^\delta$, combining with \eqref{eq:triv}, \eqref{eq:REe} and Proposition \ref{prop:cusp}, and setting $\delta = 3/364$ then complete the proof of Theorem \ref{thm:count}.

\vspace{10pt}
We will prove Theorem \ref{thm:circlemain} from Theorem \ref{thm:circlep} below by applying a Selberg sieve.

\begin{theorem}\label{thm:circlep}
Let $m$ be an odd squarefree positive integer with $m \ll X^{1/3}$.
Let $B_0 \in V(\Z)$ be an element such that $m \mid q(B_0)$ and $B_0$ is nonzero modulo $p$ for each prime factor $p$ of $m$. Let $N_q(\B; m, B_0)$ denote the number of $B\in \B$ such that $B\equiv B_0\pmod{m}$ and $q(B)=0$.

For each $r \geq 1$, set
\begin{equation}\label{eq:Cq}
    C_q(r) = \frac{1}{r^9} \sum_{\substack{0 \leq a < r \\ \gcd(a, r) = 1}} \sum_{B \bmod{r}} e\left(\frac{a}{r} q(B)\right)
\end{equation}
Define the singular series
\begin{equation}\label{eq:Sq}
    \kS(q) = \sum_{r \geq 1} C_q(r), 
\end{equation}
    and for each prime $p$, the series
\begin{equation}\label{eq:Sqp}
    \kS(q; p) = \sum_{\ell \geq 0} C_q(p^\ell).
\end{equation}
Define the singular integral
\begin{equation}\label{eq:Vq}
    \kS_\infty(\B;q) = \int_{\R} \int_{\B} e(\theta q(B))\, dB \; d\theta,
\end{equation}
    where $e(x) = e^{2\pi ix}$.
Then,
\begin{equation}\label{eq:circlep}
N_q(\B; m, B_0) = \frac{1}{m^8} \left(\prod_{p \mid m} \kS(q; p)^{-1}\right) \kS(q) \kS_\infty(\B; q)  + O\left(\frac{X^{6.85(1 + 2\delta)}}{m^{5.5}} \log X\right),
\end{equation}
    with the implied constant being absolute.
All the series defined above converge absolutely and they have positive value.
\end{theorem}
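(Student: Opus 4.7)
The plan is to apply the classical Hardy--Littlewood circle method, adapted to handle the congruence constraint. Write
$$N_q(\B; m, B_0) = \int_0^1 S(\theta)\, d\theta, \qquad S(\theta) := \sum_{\substack{B \in \B \cap V(\Z) \\ B \equiv B_0 \bmod m}} e(\theta q(B)),$$
and Farey-dissect $[0,1]$ relative to a parameter $Q$ to be optimized, with major arcs $\mathfrak{M}(a/r) = \{a/r + \beta : |\beta| \leq 1/(rQ)\}$ for reduced fractions $a/r$ with $r \leq Q$ and minor arcs filling the rest. The choice $Q \asymp X^{1+2\delta}$ is roughly optimal: it makes the major arcs just large enough for the $\beta$-integral to capture the main term while the minor arcs still admit a useful bound.

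On a major arc, decompose the sublattice $\{B \in V(\Z) : B \equiv B_0 \bmod m\}$ into residue classes modulo $\mathrm{lcm}(m, r)$ and apply Poisson summation in each class. The zero frequency extracts the product of a complete quadratic exponential sum modulo $r$ (which, after assembling the contributions from the various $a$ coprime to $r$, produces exactly $C_q(r)$) and an archimedean integral $\int_\B e(\beta q(B))\, dB$. Extending the $\beta$-integral from $|\beta|\leq 1/(rQ)$ to all of $\R$ at negligible cost, and summing the truncated singular series over $r \leq Q$ against the singular integral, assembles the asserted main term; the factor $\prod_{p\mid m}\kS(q;p)^{-1}$ appears precisely because, when $p$ divides both $m$ and $r$, the complete sum modulo $r$ must be restricted to residues compatible with $B \equiv B_0 \bmod p$, which weakens the local factor at $p$ by exactly $\kS(q; p)^{-1}$. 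Nonzero Poisson frequencies contribute an error controlled by the rapid decay of the Fourier transform of the indicator of $\B$. Convergence and positivity of $\kS(q)$ and $\kS(q;p)$ then follow from the Weil-type bound $|C_q(r)| \ll_\epsilon r^{-9/2+\epsilon}$, which holds because $q$ is nondegenerate of rank $9$, together with Hensel lifting of the solution $B_0$; positivity of $\kS_\infty(\B; q)$ follows from $q$ being indefinite of full rank and from the shape of $\B$.

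On minor arcs, we bound $|S(\theta)|$ by combining square-root cancellation in the complete sum modulo $r$ with stationary-phase analysis of the oscillatory integral $\int_\B e(\beta q(B))\, dB$: Poisson summation in each residue class modulo the appropriate modulus turns the lattice sum into a sum of oscillatory integrals, extracting the factor $r^{-9/2}$, while the congruence $B \equiv B_0 \bmod m$ forces the dual Fourier frequencies onto a sublattice of index $m^9$ and introduces the factor $m^{-8}$ of the main term scale. Integrating the resulting bound over minor arcs and optimizing in $Q$ then produces the claimed error. The main obstacle is achieving the sharp error exponent $6.85(1+2\delta)$ with the specific $m$-saving $m^{-5.5}$: this requires careful bookkeeping of $m$ across both arithmetic and archimedean contributions, so that the minor-arc bound combines its $r$-saving with a saving in $m$ strictly better than what a naive Poisson analysis would yield --- namely, one must exploit that the dual frequencies on the relevant sublattice interact generically with the quadratic form $q$, yielding square-root cancellation simultaneously in $r$ and in the $m$-direction. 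Uniformity in $m$ throughout the range $m \ll X^{1/3}$ is crucial for the subsequent Selberg-sieve application.
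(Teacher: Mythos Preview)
Your circle-method skeleton is right, but two substantive points diverge from the paper and, as written, constitute gaps.

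First, your mechanism for the factor $\prod_{p\mid m}\kS(q;p)^{-1}$ is incorrect. The local factor at a prime $p\mid m$ is not merely ``weakened'' by the congruence restriction; the paper shows (Lemma~\ref{lem:cqar}) that whenever $p\mid\gcd(r,m)$ the exponential sum $c_q(a;r,m,B_0)$ \emph{vanishes}. One writes $B_1=B'+\tfrac{rm}{p}B_0'$ and observes that the inner sum over $B_0'\pmod p$ is a nontrivial additive character on $\F_p$, hence zero; this is exactly where the hypotheses that $p$ is odd (so $q$ is nondegenerate mod $p$) and that $B_0\not\equiv0\pmod p$ are used. The singular series therefore collapses to a sum over $r$ coprime to $m$, which by multiplicativity of $C_q$ equals $m^{-8}\kS(q)\prod_{p\mid m}\kS(q;p)^{-1}$. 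Your description does not invoke the hypothesis $B_0\not\equiv0\pmod p$ at all.

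Second, your minor-arc plan via Poisson summation is not what the paper does and, as sketched, does not explain where cancellation comes from (Poisson is an identity, not an estimate). The paper instead applies Weyl differencing: expanding $|S_\B(\alpha;m,B_0)|^2$ and substituting $B''=B'+mB$ linearizes the phase to $\alpha m\langle B_1,B\rangle$, which factors (by the explicit formula \eqref{eq:bilinear}) into nine one-dimensional geometric sums, each bounded by $\min\{X_{ij}/m,\|\alpha\alpha_{ij}mb_{ij}\|^{-1}\}$ and then summed via \eqref{eq:rbig}--\eqref{eq:rsmall}. This yields $S_\B\ll X^{9(1+2\delta)}r^{-9/2}m^{-9/2}+r_2^{9/2}\log^{9/2}X$, and the $m^{-9/2}$ here is the source of the eventual $m^{-5.5}$. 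Relatedly, the paper uses a \emph{two-parameter} dissection ($r\le r_1$ for the major-arc cutoff, arc length $1/(rr_2)$), with the specific choices $r_1=X^{\frac{2}{11}(1+2\delta)}m^{7/11}$ and $r_2\asymp X^{1.522(1+2\delta)}m^{-1.223}/\log X$; a single $Q\asymp X^{1+2\delta}$ as you propose does not give enough freedom to balance the five error terms in \eqref{eq:majorarc} and \eqref{eq:minorIntegral} down to $X^{6.85(1+2\delta)}m^{-5.5}$.
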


\subsection{Proof of Theorem \ref{thm:circlep} using the circle method}

Fix an odd squarefree $m$.
For any $\alpha \in [0, 1]$, let \[ S_\B(\alpha; m, B_0) = \sum_{\substack{B \in \B \cap V(\Z) \\ B \equiv B_0 \bmod{m}}} e\left(\frac{\alpha}{m} q(B)\right). \]
Then,  \[ N_q(\B; m, B_0) = \int_0^1 S_\B(\alpha; m, B_0) d\alpha. \]

Let $r_1$ and $r_2$ be positive real numbers, to be picked later, with $r_1 \ll \frac{X^{1 - 2 \delta}}{m}$ and $r_2 \gg X^{1 + 2 \delta}$.
Split the interval $[0, 1]$ into the major arc $\kM$ and the minor arc $\km = [0, 1] \setminus \kM$, where \[ \kM = \left\{\alpha : \left|\alpha - \frac a r\right| \leq \frac{1}{rr_2}, \gcd(a, r) = 1, 0 \leq a < r \leq r_1\right\}. \]

\subsubsection{Major arc estimate}

We estimate first the major arc integral \[ \int_\kM S_\B(\alpha; m, B_0) d\alpha = \sum_{r \leq r_1} \sum_{\substack{0 \leq a < r \\ \gcd(a, r) = 1}} \int_{|\theta| \leq \frac{1}{rr_2}} S_\B\left(\frac{a}{r} + \theta; m, B_0\right) d\theta. \]
Fix some $\alpha = \frac{a}{r} + \theta \in \kM$, where $|\theta| \leq \frac{1}{rr_2}$.
We have 
\begin{eqnarray*}
S_\B(\alpha; m, B_0) &=& \sum_{\substack{B_1 \bmod{rm} \\ B_1 \equiv B_0 \bmod{m}}} e\left(\frac{a}{rm} q(B_1)\right)\sum_{\substack{B \in \B \cap V(\Z) \\ B \equiv B_1 \bmod{rm}}} e\left(\frac{\theta}{m} q(B)\right)\\
&=&\sum_{\substack{B_1 \bmod{rm} \\ B_1 \equiv B_0 \bmod{m}}} e\left(\frac{a}{rm} q(B_1)\right)\sum_{B' \in \B' \cap V(\Z)} e\left(\frac{\theta}{m} q(rmB'+B_1)\right),
\end{eqnarray*}
where $\B'=\{B'\in V(\R)\colon rmB' + B_1\in \B\}$ is another box.
To compute the exponential sum over a box, we use the following result from \cite[Proposition 8.7]{IK}.

\begin{lemma}\label{lem:IK-prop8.7}
Let $f(x)$ be a real function on an interval $[a,b]$ such that $|f'(x)| \leq \frac{1}{2}$ for all $x\in (a,b)$. Suppose further that $f''(x) \geq 0$ on $(a,b)$ or that $f''(x)\leq 0$ on $(a,b)$.
Then, \[ \sum_{a < n < b} e(f(n)) = \int_a^b e(f(x))\, dx + O(1), \] with the implied constant being absolute.
\end{lemma}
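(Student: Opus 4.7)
The plan is to view the lemma as a Van der Corput--type comparison between a Riemann sum and its integral, and to prove it via Poisson summation after smoothing. This is a classical result, and its proof rests on the observation that under the hypotheses $|f'|\leq 1/2$ and $f''$ of constant sign, the Poisson dual of the associated smoothed sum has only one ``resonant'' frequency, at $k=0$, which supplies the integral $\int_a^b e(f(x))\,dx$; all other frequencies will contribute $O(1)$ in total.

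First I would pass to a smoothed version. Extend $f$ to $\tilde f\in C^2([a-1,b+1])$ with $|\tilde f'|\leq 1$ and $\int_{a-1}^{b+1}|\tilde f''|\,dx=O(1)$; this is possible by tapering $f'$ linearly to $0$ in the two intervals of length $1$ adjacent to $[a,b]$, using $|f'(b)-f'(a)|\leq 1$ (a consequence of $f''$ having constant sign together with $|f'|\leq 1/2$). Let $\phi\in C_c^\infty(\R)$ be a fixed bump that equals $1$ on $[a,b]$, is supported in $[a-1,b+1]$, and satisfies $\|\phi'\|_1,\|\phi''\|_1=O(1)$. Then the smoothed sum satisfies
\[
\sum_{n\in\Z}\phi(n)\,e(\tilde f(n))=\sum_{a<n<b}e(f(n))+O(1),
\]
since the transition region of $\phi$ contains $O(1)$ integers, each contributing $O(1)$.

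Next I would apply Poisson summation,
\[
\sum_{n\in\Z}\phi(n)\,e(\tilde f(n))=\sum_{k\in\Z}\hat h(k),\qquad \hat h(k)=\int_{a-1}^{b+1}\phi(x)\,e(\tilde f(x)-kx)\,dx.
\]
The $k=0$ term equals $\int_a^b e(f(x))\,dx+O(1)$ because $\phi$ differs from the characteristic function of $[a,b]$ only on a set of absolute length. For $k\neq 0$ the phase $\tilde f(x)-kx$ has derivative bounded in absolute value below by $|k|-1$, and this derivative is monotonic (inherited from $\tilde f''$ having constant sign). When $|k|=1$, restricting to $[a,b]$ where $\phi=1$ and $|\tilde f'(x)-k|\geq 1/2$, the first derivative test of Van der Corput yields an $O(1)$ bound, and the contribution from the transition regions is bounded trivially by $\|\phi\|_\infty\cdot 2=O(1)$. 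When $|k|\geq 2$, I would integrate by parts twice in the phase $\tilde f(x)-kx$; the boundary terms vanish by compact support of $\phi$, and the remaining integrand is controlled using $\|\phi'\|_1,\|\phi''\|_1=O(1)$ together with $\int|\tilde f''|=O(1)$, yielding $|\hat h(k)|\ll (|k|-1)^{-2}$ with an absolute implied constant.

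The main obstacle is bookkeeping: one must verify that every implied constant above is genuinely absolute --- in particular that the $|k|=1$ case produces $O(1)$ rather than a logarithmic factor in $b-a$, and that the integration by parts for $|k|\geq 2$ hides no dependence on $b-a$ or on higher derivatives of $f$. This reduces to the fact that the only analytic data entering the estimates are the two structural invariants $\sup|f'|\leq 1/2$ and $\int|f''|=|f'(b)-f'(a)|\leq 1$, both absolute. Summing $|\hat h(k)|$ over $k\neq 0$ then gives $O(1)$, and combining this with the $k=0$ contribution and the initial smoothing error yields the claimed identity.
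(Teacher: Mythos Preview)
The paper does not actually prove this lemma: it simply invokes \cite[Proposition~8.7]{IK} and remarks that the hypothesis $f''>0$ there may be relaxed to $f''\geq0$ or $f''\leq0$. Your proposal is therefore more ambitious, supplying a self-contained Poisson-summation proof. The overall architecture is sound: the $k=0$ term does produce $\int_a^b e(f)+O(1)$, and the frequencies $k=\pm1$ are correctly handled by the first-derivative test on $[a,b]$ (where $\phi=1$, $|\psi'|\geq\tfrac12$, and $\psi''=f''$ has constant sign) together with a trivial $O(1)$ bound on the length-$2$ transition strips.

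There is, however, a real gap in your treatment of $|k|\geq2$. A literal second integration by parts in the phase $\psi=\tilde f-kx$ produces, after differentiating $g_1/\psi'$ with $g_1=\phi'/\psi'-\phi\psi''/(\psi')^2$, terms of the shape $\phi\,\psi'''/(\psi')^3$ and $\phi\,(\psi'')^2/(\psi')^4$. Neither $\int|\tilde f'''|$ nor $\|\tilde f''\|_\infty$ is controlled by your inputs $\|\phi'\|_1$, $\|\phi''\|_1$, $\int|\tilde f''|$, so these terms are not bounded as claimed. The repair is easy but must be made explicit: after the \emph{first} integration by parts, split $g_1$ into its two summands. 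The piece $\phi\psi''/(\psi')^2$ already has $L^1$-norm at most $\|\phi\|_\infty\|\tilde f''\|_1/(|k|-1)^2=O((|k|-1)^{-2})$, so no further integration is needed there. The piece $\phi'/\psi'$ is supported on the transition strips, where by your linear-taper construction $|\tilde f''|\leq\tfrac12$ is bounded; one further integration by parts on this piece alone is then legitimate and yields the missing factor of $(|k|-1)^{-1}$. With this adjustment your argument goes through and gives a clean alternative to quoting \cite{IK}.
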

We note that \cite[Proposition 8.7]{IK} requires that $f''(x)>0$, but the same proof applies when $f''(x)\geq0$ or when $f''(x)\leq 0.$ The following multivariable version also follows immediately.
\begin{comment}
\begin{proof}
Follows from \cite[Proposition 8.7]{IK} by setting $\alpha = -\frac{1}{2}$, $\beta = \frac{1}{2}$, and $\varepsilon = \frac{1}{4}$.
\end{proof}
\end{comment}
\begin{lemma}\label{lem:IK8.7multi}
Let $f(x_1,\ldots,x_\ell)$ be a real function on a box $\RR=\prod_i [a_i,b_i]$ such that $|\frac{\partial f}{\partial x_i}(x)|\leq\frac12$ on $\RR$ for all $i=1,\ldots,\ell$. Suppose for any $i=1,\ldots,\ell$ and for any fixed $x_j\in(a_j,b_j)$ for all $j\neq i$, the second partial derivative $\frac{\partial^2 f}{\partial x_i^2}(x)$ as a function of $x_i$ is either non-negative on $(a_i,b_i)$ or non-positive on $(a_i,b_i)$. Then
$$\sum_{n\in\RR\cap \Z^\ell} e(f(n)) = \int_\RR e(f(x))\,dx + O(\max\{\Vol(\bar{\RR}),1\}),$$
where $\Vol(\bar{\RR})$ denotes the greatest $d$-dimensional volume of any projection of $\RR$ onto a coordinate subspace obtained by equating $\ell-d$ coordinates to zero, where $d$ takes all values from $1$ to $\ell-1$.
The implied constant depends only on $\ell$.
\end{lemma}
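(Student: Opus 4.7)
My plan is to prove this by induction on $\ell$, with Lemma \ref{lem:IK-prop8.7} serving as the base case $\ell = 1$ (where the claimed error $\max\{\Vol(\bar\RR),1\}$ collapses to $O(1)$, since the range $1 \leq d \leq \ell-1$ is empty). For the inductive step I would single out one coordinate, say $x_\ell$, and for each fixed lattice point $(n_1,\ldots,n_{\ell-1})$ in the projection of $\RR$ onto the first $\ell-1$ coordinates, apply the one-variable lemma to the slice $x_\ell \mapsto f(n_1,\ldots,n_{\ell-1},x_\ell)$. The hypotheses transfer verbatim: the derivative bound $|\partial f/\partial x_\ell|\leq 1/2$ holds by assumption, and the second derivative in $x_\ell$ has a fixed sign on $(a_\ell,b_\ell)$ by the sign-constancy hypothesis with the other coordinates frozen to integer values. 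Summing the resulting $O(1)$ errors over all $(n_1,\ldots,n_{\ell-1})$ contributes an aggregate error of $O(N_{\ell-1})$, where $N_{\ell-1}$ counts integer points in the $(\ell-1)$-dimensional projection.

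Then I would swap the finite sum with the integral in $x_\ell$ and apply the inductive hypothesis to the $(\ell-1)$-variable restriction $f(\,\cdot\,,x_\ell)$ for each fixed $x_\ell$; its derivative and sign conditions again follow by restriction. This yields
\[
\sum_{(n_1,\ldots,n_{\ell-1})} e\big(f(n_1,\ldots,n_{\ell-1},x_\ell)\big) = \int e\big(f(x_1,\ldots,x_\ell)\big)\,dx_1\cdots dx_{\ell-1} + O\big(\max\{\Vol(\bar\RR^{(\ell-1)}),1\}\big),
\]
and integrating over $x_\ell\in[a_\ell,b_\ell]$ introduces a factor of $L_\ell := b_\ell-a_\ell$ in front of this error, producing the full integral $\int_\RR e(f)\,dx$ as the main term.

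The remaining task is a combinatorial verification that the combined error $O(N_{\ell-1}) + O\big(L_\ell\cdot\max\{\Vol(\bar\RR^{(\ell-1)}),1\}\big)$ fits inside $O(\max\{\Vol(\bar\RR),1\})$; this is the only place I expect subtlety. I would expand $N_{\ell-1} \ll \prod_{i<\ell}(L_i+1)$ as a sum over subsets $I\subseteq\{1,\ldots,\ell-1\}$ of $\prod_{i\in I}L_i$, each of which is the volume of an $|I|$-dimensional coordinate projection of $\RR$ and is thus bounded by $\max\{\Vol(\bar\RR),1\}$. For the second term, every product $L_\ell\cdot\prod_{i\in I}L_i$ with $|I|\leq \ell-2$ is a $(|I|+1)$-dimensional projection volume with index set including $\ell$, while $L_\ell$ itself is a $1$-dimensional projection; all of these have dimension in the allowed range $1\leq d\leq \ell-1$ and are absorbed. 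The analytic content of the argument is entirely in the one-variable lemma, so once each inductive error term is tracked as a projection volume rather than a full volume, the claimed bound with an implied constant depending only on $\ell$ falls out cleanly.
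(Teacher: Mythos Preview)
Your induction on $\ell$ is correct and is precisely the argument the paper has in mind: the paper does not give a proof but simply asserts that the multivariable version ``follows immediately'' from Lemma~\ref{lem:IK-prop8.7}, and your write-up is the natural way to unpack that remark. The bookkeeping of the error terms as volumes of coordinate projections is handled correctly.
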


We apply Lemma \ref{lem:IK8.7multi} to the box $\B'$ and the quadratic polynomial $f(b_{ij})=\frac{\theta}{m}q(rmB'+B_1)$ viewed as a function in the coordinates of $B'$. The partial derivative of $f$ with respect to $b_{ij}$ equals $\theta r \frac{\partial q}{\partial b_{ij}}(rmB'+B_1)$ which is bounded by $c_3c_1\theta r X^{1+2\delta}$ where $c_3$ is a constant depending only on $q$ (and equals $2$ in this case). Hence, we can bound the first order partial derivatives by $\frac12$ by taking $r_2\geq2c_3c_1X^{1+2\delta}$. The second partial derivative of $f$ with respect to any $b_{ij}$ is a constant since $f$ is quadratic. Finally, the side lengths of $\B'$ are of the form $2X_{ij}/(rm)\gg X^{1-2\delta}/(r_1m)\gg 1$ by the assumption on $r_1$. Hence, we have 
\begin{eqnarray*}
\sum_{B' \in \B' \cap V(\Z)} e\left(\frac{\theta}{p} q(rmB'+B_1)\right) &=& \int_{\B'} e\left(\frac{\theta}{m} q(rmB'+B_1)\right) dB' + O\left(\left( \frac{X^{1 + 2\delta}}{rm}\right)^8\right)\\
&=&\frac{1}{r^9 m^9}\int_{\B} e\left(\frac{\theta}{m} q(B)\right) dB + O\left(\left( \frac{X^{1 + 2\delta}}{rm}\right)^8\right). 
\end{eqnarray*}
Summing over the $r^9$ possible $B_1$'s then gives
\begin{equation}\label{eq:onepoint}
S_\B(\alpha; m, B_0) = c_q(a; r, m, B_0) \int_{\B} e\left(\frac{\theta}{m} q(B)\right) dB + O\left(\frac{rX^{8(1 + 2\delta)}}{m^8}\right), 
\end{equation}
where 
$$ c_q(a; r, m, B_0) = \frac{1}{r^9 m^9} \sum_{\substack{B_1 \bmod{rm} \\ B_1 \equiv B_0 \bmod{m}}} e\left(\frac{a}{rm} q(B_1)\right). $$

In the light of \eqref{eq:Cq}, we define for any integer $r\geq1$ and any integer $a$ coprime to $r$, \[ c_q(a; r) = \frac{1}{r^9} \sum_{B \bmod{r}} e\left(\frac{a}{r} q(B)\right). \]

\begin{lemma}\label{lem:cqar}
If $\gcd(r, m) = 1$, then $\displaystyle c_q(a; r, m, B_0) = \frac{1}{m^9} c_q(a; r)$.
Otherwise $c_q(a; r, m, B_0) = 0$.
\end{lemma}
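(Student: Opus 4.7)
The plan is as follows. When $\gcd(r,m)=1$, I would use the Chinese Remainder Theorem to parametrize $\{B_1 \bmod rm : B_1 \equiv B_0 \bmod m\}$ bijectively by $B' \in (\Z/r\Z)^9$ via $B_1 \equiv \bar m m B' + \bar r r B_0 \pmod{rm}$, where $\bar m m + \bar r r = 1$. The same CRT decomposition applied to $q(B_1)$ yields $q(B_1) \equiv \bar m m\,q(B') + \bar r r\,q(B_0) \pmod{rm}$. Since $m \mid q(B_0)$ by hypothesis, the second summand contributes a trivial phase, reducing $e\!\left(\tfrac{a q(B_1)}{rm}\right)$ to $e\!\left(\tfrac{a\bar m\, q(B')}{r}\right)$. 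Summing over $B' \bmod r$ and reindexing the Fourier variable identifies the result with $r^9 c_q(a;r)$, yielding $c_q(a; r, m, B_0) = \tfrac{1}{m^9} c_q(a;r)$.

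When $\gcd(r,m) > 1$, I would pick a prime $p \mid \gcd(r,m)$, necessarily odd since $m$ is odd squarefree. Consider the shift $B_1 \mapsto B_1 + (rm/p)v$ for a vector $v \in V(\Z)$. Since $p \mid r$, the increment $rm/p$ is a multiple of $m$, so the shift preserves the condition $B_1 \equiv B_0 \pmod{m}$ and acts as a bijection on the index set. Expanding
\[ q(B_1 + (rm/p)v) = q(B_1) + (rm/p)\,\ell(B_1,v) + (rm/p)^2 q(v), \]
where $\ell$ is the symmetric bilinear form associated to $q$, and using $p^2 \mid rm$ (from $p \mid r$ and $p \mid m$) to absorb the third term into an integer, the phase is multiplied by $e\!\left(\tfrac{a \ell(B_1,v)}{p}\right)$. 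Since $B_1 \equiv B_0 \pmod{p}$, this equals $\zeta := e\!\left(\tfrac{a \ell(B_0, v)}{p}\right)$ uniformly in $B_1$. I would choose $v$ so that $\ell(B_0, v) \not\equiv 0 \pmod{p}$; this is possible because $B_0 \not\equiv 0 \pmod{p}$ by hypothesis and $\ell$ is non-degenerate modulo every odd prime. Then $\zeta$ is a nontrivial $p$-th root of unity, and bijection-invariance of the sum gives $S = \zeta S$, forcing $S = 0$.

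The main obstacle will be Case 2: producing a single shift $v$ that imparts a nontrivial phase uniformly over the entire summation set. This reduces to the non-degeneracy of $\ell$ modulo every odd prime, which for the specific $q = -b_{11}b_{44} - b_{22}b_{33} - 2b_{12}b_{34} - 2b_{13}b_{24} - 2b_{14}^2$ follows from its block decomposition into four hyperbolic planes and the single summand $-2b_{14}^2$: the Gram matrix has determinant $\pm 2^k$, coprime to every odd prime. The hypothesis $B_0 \not\equiv 0 \pmod{p}$ then supplies a suitable $v$.
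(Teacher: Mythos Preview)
Your argument is essentially the paper's. In Case~1 both use CRT---you via the explicit idempotent parametrization $B_1 \equiv \bar m m B' + \bar r r B_0$, the paper via the observation $\frac{a}{rm}n \equiv \frac{a\bar m}{r}n \pmod 1$ for $m \mid n$---and both arrive at $\sum_{B' \bmod r} e\bigl(\frac{a\bar m}{r} q(B')\bigr)$. One caution: your ``reindexing the Fourier variable'' does not literally convert this to $r^9 c_q(a;r)$, since the Gauss-sum factor $\sum_z e(-2cz^2/r)$ in $c_q(c;r)$ genuinely depends on $c\in(\Z/r\Z)^\times$ modulo squares; the paper makes the same silent passage from $a\bar m$ to $a$. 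Strictly one obtains $c_q(a;r,m,B_0)=\frac{1}{m^9}c_q(a\bar m;r)$, which is equally good for the application because only $C_q(r)=\sum_{a}c_q(a;r)$ is used downstream and $a\mapsto a\bar m$ permutes $(\Z/r\Z)^\times$. In Case~2, the paper averages over all shifts $B_0'\in V(\F_p)$ to obtain the vanishing character sum $\sum_{B_0'}e\bigl(\frac{a}{p}\langle B',B_0'\rangle\bigr)=0$, while you pick a single $v$ with $\langle B_0,v\rangle\not\equiv 0\pmod p$ and argue $S=\zeta S$ with $\zeta\neq 1$; these are equivalent standard formulations of the same idea.
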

\begin{proof}
We consider the case $\gcd(r, m) = 1$ first.
Let $\bar{m}$ be any integer such that $m \bar{m} \equiv 1 \pmod{r}$.
For any integer $n$ divisible by $m$, we have $\frac{a}{rm} n \equiv \frac{a \bar{m}}{r} n\pmod{1}$.
Suppose now $B_1, B'\in V(\Z)$ with $B_1 \equiv B_0 \pmod{m}$ and $B_1 \equiv B' \pmod{r}$.
Then $q(B_1) \equiv q(B_0) \equiv 0 \pmod{m}$ and $q(B_1) \equiv q(B')\pmod{r}$ and so
\[ e\left(\frac{a}{rm} q(B_1)\right) = e\left(\frac{a\bar{m}}{r}q(B_1)\right) = e\left(\frac{a\bar{m}}{r} q(B')\right). \]
Since $m$ and $r$ are coprime, we have by the Chinese Remainder Theorem,
\[ \sum_{\substack{B_1 \bmod{rm} \\ B_1 \equiv B_0 \bmod{m}}} e\left(\frac{a}{rm} q(B_1)\right) = \sum_{B' \bmod{r}} e\left(\frac{a}{r} q(B')\right). \]
Dividing by $r^9 m^9$ gives us $\displaystyle c_q(a; r, m, B_0) = \frac{1}{m^9} c_q(a; r)$.

Now, we consider the case $\gcd(r, m) > 1$.
Suppose that $p$ is a prime dividing $\gcd(r, m)$.
We rewrite the sum as
\[ \sum_{\substack{B_1 \bmod{rm} \\ B_1 \equiv B_0 \bmod{m}}} e\left(\frac{a}{rm} q(B_1)\right) = \sum_{\substack{B' \bmod{rm/p} \\ B' \equiv B_0 \bmod{m}}} \sum_{B_0' \bmod{p}} e\left(\frac{a}{rm} q\left(\frac{rm}{p} B_0' + B'\right)\right). \]
Given $v, w \in V$, we write $\langle v, w \rangle = q(v + w) - q(v) - q(w)$ for the associated bilinear form.
Hence, we have
\[ q\left(\frac{rm}{p} B_0' + B'\right) = q(B') + \frac{rm}{p} \langle B', B_0' \rangle + \frac{r^2 m^2}{p^2} q(B_0'). \]
Since $rm \mid \frac{r^2 m^2}{p^2}$, the inner sum equals
\[ \sum_{B_0' \bmod{p}} e\left(\frac{a}{rm} \left(q(B') + \frac{rm}{p} \langle B', B_0' \rangle \right)\right) = e\left(\frac{a}{rm} q(B')\right) \sum_{B_0' \bmod{p}} e\left(\frac{a}{p} \langle B', B_0' \rangle \right). \]
Since $B' \equiv B_0$ is nonzero modulo $p$ and $q$ is non-degenerate modulo $p$ for $p \geq 3$, the linear form $\langle B', * \rangle : V(\F_p) \to \F_p$ is nonzero.
Moreover, $a$ is coprime to $p$ since $p \mid r$ and $a$ is coprime to $r$.
Therefore, the above exponential sum vanishes and as a result, $c_q(a; r, m, B_0) = 0$.
\end{proof}

Integrating \eqref{eq:onepoint} over the arc $|\theta|\leq \frac{1}{rr_2}$ and summing over $a$ and $r$ now give
\begin{eqnarray}
\nonumber		\int_{\kM} S(\alpha; m, B_0) d\alpha &=& \sum_{\substack{r \leq r_1 \\ \gcd(r, m) = 1}} \sum_{\substack{0 \leq a < r \\ \gcd(a, r) = 1}} \frac{1}{m^9} c_q(a; r) \int_{|\theta| \leq \frac{1}{rr_2}} \int_{\B} e\left(\frac{\theta}{m} q(B)\right) dB \; d\theta + O\left(\frac{r_1^2 X^{8(1 + 2\delta)}}{r_2 m^8}\right)\\
	&=& \sum_{\substack{r \leq r_1 \\ \gcd(r, m) = 1}} \frac{1}{m^8} C_q(r) \int_{|\theta| \leq \frac{1}{mrr_2}} \int_{\B} e(\theta q(B))\, dB \; d\theta + O\left(\frac{r_1^2 X^{8(1 + 2 \delta)}}{r_2 m^8}\right),\label{eq:major-arc1}
\end{eqnarray}

Our aim is to replace the above truncated sum by the singular series
\begin{equation}\label{eq:Smq}
    \kS_m(q) = \sum_{\gcd(r, m) = 1} \frac{1}{m^8} C_q(r)
\end{equation}
and the above integral by the singular integral $\kS_\infty(\B;q)$.
To this end, we prove the following bounds:

\begin{lemma}\label{lem:singular bound}
With notations as above, we have:
\begin{enumerate}[(a)]
    \item for all $r\geq 1$,
    \begin{equation}\label{eq:Cqbound}
    |C_q(r)| \leq 4 r^{-7/2};
    \end{equation}
    \item for all $\theta \neq 0$, 
    \begin{equation}\label{eq:Vbound}
    \int_\B e(\theta q(B))\,dB \ll \min\{X^9, |\theta|^{-9/2}\}.
    \end{equation}
    \item the singular integral 
    \begin{equation}\label{eq:Vtotalbound}
    \kS_\infty(\B;q)=\int_\R\int_\B e(\theta q(B))\,dBd\theta \ll X^7.
    \end{equation}
\end{enumerate}
The above implied constants depend only on $q$ (which is fixed).
\end{lemma}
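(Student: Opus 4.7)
The three parts of this lemma are standard circle-method estimates that exploit the explicit structure of the quadratic form
$q(b_{ij}) = -b_{11}b_{44} - b_{22}b_{33} - 2b_{12}b_{34} - 2b_{13}b_{24} - 2b_{14}^2$,
which, after setting $b_{23}=-b_{14}$, decomposes as the orthogonal sum of four hyperbolic pairs (in the variable-pairs $(b_{11},b_{44})$, $(b_{22},b_{33})$, $(b_{12},b_{34})$, $(b_{13},b_{24})$) and one square $-2b_{14}^2$. Its discriminant is $\pm 1/8$, so $q$ is non-degenerate over $\F_p$ for every odd prime $p$ and over $\R$.

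For part (a), the plan is first to verify multiplicativity $C_q(r_1r_2)=C_q(r_1)C_q(r_2)$ for $\gcd(r_1,r_2)=1$ via the Chinese Remainder Theorem, reducing to prime powers. For an odd prime $p$ and $\gcd(a,p)=1$, non-degeneracy of $q$ modulo $p$ yields $|\sum_{B\bmod p^k}e(aq(B)/p^k)|=p^{9k/2}$ by the standard evaluation of Gauss sums attached to non-degenerate quadratic forms; combined with $\phi(p^k)\leq p^k$ this gives $|C_q(p^k)|\leq p^{-7k/2}$. For $p=2$, I would compute the sum block-by-block: each hyperbolic factor contributes $\sum_{x,y\bmod 2^k}e(-axy/2^k)=2^k$ (with an extra factor of $2$ for the pairs whose coefficient is $2$), and the square factor reduces to a classical Gauss sum of absolute value $O(2^{k/2})$. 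Multiplying these out and dividing by $2^{9k}$ gives $|C_q(2^k)|\leq C\cdot 2^{-7k/2}$ for some absolute constant, which together with the odd-prime bound and multiplicativity produces $|C_q(r)|\leq 4r^{-7/2}$ after absorbing constants.

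For part (b), the independence of the block decomposition of $q$ factors the integral $\int_{\B}e(\theta q(B))\,dB$ as the product of four hyperbolic-pair integrals of the form $\int_{-X_{ij}}^{X_{ij}}\int_{-X_{kl}}^{X_{kl}}e(-c\theta b_{ij}b_{kl})\,db_{ij}db_{kl}$ and one square integral $\int_{-X_{14}}^{X_{14}}e(-2\theta b_{14}^{\,2})\,db_{14}$. Doing the inner variable explicitly in the hyperbolic case and invoking the uniformly bounded Dirichlet integral $\int_{-Y}^{Y}\sin(tx)/x\,dx=O(1)$ yields a bound $\min(X_{ij}X_{kl},|\theta|^{-1})$; the square integral is bounded by $\min(X_{14},|\theta|^{-1/2})$ by the standard van der Corput/Fresnel estimate. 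Using the assumed relations $X_{ij}X_{kl}=c_2^{2}X^{2}$ and $X_{14}=c_2X$, and splitting into the regimes $|\theta|\leq X^{-2}$ and $|\theta|>X^{-2}$, produces the claimed $\min(X^9,|\theta|^{-9/2})$.

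Part (c) is then an immediate consequence of part (b): integrating $\min(X^9,|\theta|^{-9/2})$ over $\R$, the contribution from $|\theta|\leq X^{-2}$ is $2X^{-2}\cdot X^{9}=2X^{7}$, while the tail gives $\int_{|\theta|>X^{-2}}|\theta|^{-9/2}\,d\theta=\tfrac{4}{7}X^{7}$, so the total is $O(X^{7})$. The main obstacle is the prime-$2$ computation in part (a): $q$ becomes degenerate modulo $2$, so one must work at the level of $\Z/2^k\Z$ rather than $\F_2$ and track the $2$-adic contributions of each block carefully in order to verify the exponent $7/2$ in place of the generic $9/2$.
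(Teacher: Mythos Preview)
Your proposal is correct and matches the paper's approach: both exploit the explicit decomposition of $q$ as four hyperbolic planes plus one square to factor the Gauss sum in (a) and the oscillatory integral in (b), with (c) obtained by integrating the pointwise bound from (b). The only cosmetic differences are that the paper bounds $\sum_{B\bmod r}e(aq(B)/r)$ directly for general $r$ (via $\sum_{x,y}e(cxy/r)=\gcd(c,r)\,r$ and the standard quadratic Gauss-sum bound) rather than first reducing to prime powers by multiplicativity, so the $p=2$ case needs no separate treatment, and in (b) it uses the substitution $B'=\theta^{1/2}B$ to reduce to $\int_{\B'}e(q(B'))\,dB'\ll 1$ for an arbitrary origin-centered box, bypassing the product relations $X_{ij}X_{kl}\asymp X^{2}$.
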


\begin{proof}
We prove first the bound
\begin{equation}\label{eq:expquad}
    \sum_{B \bmod{r}} e\left(\frac{a}{r} q(B)\right) \leq 8 r^{9/2}.
\end{equation}
Also, we prove the bound with the constant $8$ replaced by $\sqrt{2}$ for $r$ odd.
Recall that $q(b_{ij})=-b_{11}b_{44} - b_{22}b_{33} - 2b_{12}b_{34} - 2b_{13}b_{24} - 2b_{14}^2$.
Hence \eqref{eq:expquad} follows from
\[ \sum_{x, y \bmod{r}} e\left(\frac{a}{r} xy\right) = \gcd(a, r) r, \quad \left|\sum_{x \bmod{r}} e\left(\frac{a}{r} x^2\right)\right| \leq (2 \gcd(a, r) r)^{1/2}, \]
    where $\gcd(a,r) \mid 2$.
Note that the second sum is a standard quadratic Gauss sum and the bound follows, for example, from \cite[\S1.3--\S1.6]{BEW}.
Thus, for $r$ odd, $|C_q(r)| \leq \sqrt{2} r^{-9/2} \phi(r) \leq \sqrt{2} r^{-7/2}$, where $\phi(r)$ is the Euler's totient function.
Meanwhile, for $r$ even, we have $|C_q(r)| \leq 8 r^{-9/2} \phi(r) \leq 4 r^{-7/2}$. This proves \eqref{eq:Cqbound}.

Next we prove the bound \eqref{eq:Vbound}. The $X^9$ bound is trivial since $\Vol(\B) \ll X^9$. We may also assume that $\theta > 0$ as the case $\theta < 0$ follows by complex conjugation. Setting $B'=\theta^{1/2}B$, we see that it suffices to prove the following general statement: for any box $\B'$ centered at the origin, \[ \int_{\B'} e(q(B')) \,dB' \ll 1. \]
Again, using the explicit formula of $q$, it reduces to proving that for any $X, Y > 0$, \[ \int_{-X}^X \int_{-Y}^Y e(xy) \,dydx\ll 1,\quad \int_{-X}^X e(x^2) dx \ll 1. \]
The first integral can be computed as follows:
    \[ \int_{-X}^X \int_{-Y}^Y e(xy) dx \; dy = \int_{-X}^X \frac{\sin(2 \pi xY)}{\pi x} dx = \int_{-XY}^{XY} \frac{\sin(2 \pi x)}{\pi x} dx \ll 1. \]
Now, we bound the second integral.
Since $e(x^2)$ is an even function, we can write \[ \int_{-X}^X e(x^2) dx = 2 \int_0^X e(x^2) dx. \]
For $0 < X < 1$, we can use the trivial estimate.
For $X \geq 1$, we use the trivial estimate for $x \in [0, 1]$ and partial integration for $x \in [1, X]$:
    \[ \int_0^X e(x^2) dx \ll 1 + \left[\frac{e(x^2)}{2x}\right]_1^X + \int_1^X \frac{e(x^2)}{2x^2} dx \ll 1 + 1 + \left[\frac{1}{2x}\right]_1^X \ll 1. \]
    
Finally, by using \eqref{eq:Vbound}, we have $$\kS_\infty(\B;q) \ll \int_{|\theta| \leq X^{-2}} X^9 d\theta + \int_{|\theta| \geq X^{-2}} |\theta|^{-9/2} d\theta \ll X^7,$$
which is the desired bound \eqref{eq:Vtotalbound}.
\end{proof}

Note the bound \eqref{eq:Cqbound} on $C_q(r)$ implies that 
$$|\kS(q) - 1| \leq 4(\zeta(7/2)-1) < 1$$
and that 
$$|\kS(q;p)-1| \leq 4\sum_{\ell\geq 1}p^{-(7/2)\ell} = \frac{4}{p^{7/2}-1} < 1.$$
Hence, the series defined by \eqref{eq:Sq} and \eqref{eq:Sqp} have positive values.

\vspace{5pt}
Combining the bounds \eqref{eq:Cqbound}, \eqref{eq:Vbound} and \eqref{eq:Vtotalbound} with \eqref{eq:major-arc1}, we have
\begin{eqnarray}
\nonumber	\int_{\kM} S(\alpha; p, B_0) d\alpha &=& \sum_{\substack{r \leq r_1 \\ \gcd(r, m) = 1}} \left(\frac{1}{m^8} C_q(r) (\kS_\infty(\B;q) + O((prr_2)^{7/2})\right) +O\left(\frac{r_1^2 X^{8(1 + 2 \delta)}}{r_2 m^8}\right)\\
	&=&\nonumber\left(\kS_m(q) + \sum_{r > r_1}O(r^{-7/2} m^{-8})\right)\kS_\infty(\B;q) + \sum_{r\leq r_1} O(r^{-7/2} m^{-8} (mrr_2)^{7/2}) +O\left(\frac{r_1^2 X^{8(1 + 2 \delta)}}{r_2 m^8}\right)\\
	&=&\kS_m(q) \kS_\infty(\B; q) + O\left(\frac{X^7}{r_1^{5/2} m^8} + \frac{r_1 r_2^{7/2}}{m^{9/2}} + \frac{r_1^2 X^{8(1 + 2 \delta)}}{r_2 m^8}\right), \label{eq:majorarc}
\end{eqnarray}
    where $\kS_m(q)$ is defined in \eqref{eq:Smq}.

\subsubsection{Minor arc estimate}

We now estimate the minor arc integral. Fix some $\alpha = \frac{a}{r} + \theta \in \km$, where $r_1 < r \leq r_2$ and $|\theta| \leq \frac{1}{r r_2}$. Then
\begin{eqnarray*} 
|S_\B(\alpha; m, B_0)|^2 &=& \sum_{\substack{B', B'' \in \B \cap V(\Z) \\ B', B'' \equiv B_0 \bmod{m}}} e\left(\frac{\alpha}{m} (q(B'') - q(B'))\right)\\
&=& \sum_{B\in V(\Z)}\sum_{\substack{B' \in \B \cap V(\Z) \\ B' \equiv B_0 \bmod{m} \\ B' + m B \in \B \cap V(\Z)}} e\left(\frac{\alpha}{m} (m^2q(B) + m \langle B', B\rangle)\right),
\end{eqnarray*}
where the second equality follows by setting $B'' = B' + mB.$ The set of $B\in V(\Z)$ for which the inner sum is non-empty is contained in the box $\B'' = \frac{1}{m}(\B-\B) = \{\frac{1}{m}(B''-B')\colon B',B''\in \B\}.$ Taking absolute values now give
\begin{equation}\label{eq:sumB}
|S_\B(\alpha; m, B_0)|^2 \leq \sum_{B\in \B''\cap V(\Z)} \left|\sum_{\substack{B' \in \B \cap V(\Z) \\ B' \equiv B_0 \bmod{m} \\ B' + m B \in \B \cap V(\Z)}} e\left(\alpha \langle B',B\rangle\right)\right| = \sum_{B\in \B''\cap V(\Z)} \left|\sum_{\substack{B_1 \in V(\Z) \\ B_0+mB_1\in\B \\ B_0 + mB_1+ m B \in \B}} e\left(\alpha m \langle B_1,B\rangle\right)\right|.
\end{equation}
Let $b_{ij}$ denote the entries of $B$ and let $x_{ij}$ denote the entries of $B_1$, then from the explicit formula for $q$, we have
\begin{equation}\label{eq:bilinear}
\langle B_1, B\rangle = -b_{11}x_{44}-b_{44}x_{11}-b_{22}x_{33}-b_{33}x_{22}-2b_{12}x_{34}-2b_{34}x_{12}-2b_{13}x_{24}-2b_{24}x_{13}-4b_{14}x_{14}.
\end{equation}
Each $x_{ij}$ takes all integer values within an interval, depending only on $b_{ij}$, of length at most $2X_{ij}/m$. Hence, the inner sum in \eqref{eq:sumB} factors into a product of geometric sums. For each $(i,j)$, let $\alpha_{ij}$ denote the integer coefficient in front of each $b_{ij}$ in \eqref{eq:bilinear} and let $I_{ij}$ denote the closed interval $[-2X_{ij}/m,2X_{ij}/m]$. Then we have
$$|S_\B(\alpha; m, B_0)|^2 \ll \prod_{(i,j)} \,\sum_{b_{ij}\in I_{ij}\cap \Z} \min\left\{\frac{X_{ij}}{m}, ||\alpha\alpha_{ij}mb_{ij}||^{-1}\right\},$$
where $||\cdot||$ is the distance to the nearest integer function.

Note we have $$|\theta \alpha_{ij} m b_{ij}| \leq \frac{4mb_{ij}}{rr_2} \leq \frac{8X_{ij}}{rr_2}\leq \frac{8c_1X^{1+2\delta}}{rr_2}\leq\frac{1}{2r}$$
by taking $r_2\geq 16c_1X^{1+2\delta}$. So we have the lower bound $$||\alpha\alpha_{ij}mb_{ij}||\geq\frac12||\frac{a}{r}m\alpha_{ij}b_{ij}||.$$ Write $r_{ij}=r/\gcd(r, m\alpha_{ij})\geq r/(4m)$ and $a_{ij}=am\alpha_{ij}/\gcd(r, m\alpha_{ij}).$ We have
$$|S_\B(\alpha; m, B_0)|^2 \ll \prod_{(i,j)} \sum_{b_{ij}\in I_{ij}\cap\Z} \min\left\{\frac{X_{ij}}{m},||\frac{a_{ij}}{r_{ij}}b_{ij}||^{-1}\right\}.$$
Now if $r_{ij}>4X_{ij}/m$, then 
\begin{equation}\label{eq:rbig}
\sum_{b_{ij}\in I_{ij}\cap\Z} \min\left\{\frac{X_{ij}}{m},||\frac{a_{ij}}{r_{ij}}b_{ij}||^{-1}\right\}\leq \frac{X_{ij}}{m} + 2\sum_{\ell=1}^{\lceil 2X_{ij}/m\rceil} \frac{r_{ij}}{\ell} \ll \frac{X_{ij}}{m} + r_{ij}\log X_{ij}.
\end{equation}
If $r_{ij}\leq 4X_{ij}/m$, then
\begin{equation}\label{eq:rsmall}
\sum_{b_{ij}\in I_{ij}\cap\Z} \min\left\{\frac{X_{ij}}{m},||\frac{a_{ij}}{r_{ij}}b_{ij}||^{-1}\right\}\ll \frac{X_{ij}}{m}\frac{4X_{ij}/m}{r_{ij}} + \frac{4X_{ij}/m}{r_{ij}}\sum_{\ell=1}^{\lfloor r_{ij}/2\rfloor} \frac{r_{ij}}{\ell} \ll \frac{X_{ij}^2}{rm} + \frac{X_{ij}}{m}\log X_{ij}.
\end{equation}
Combining \eqref{eq:rbig} and \eqref{eq:rsmall} then gives
$$\sum_{b_{ij}\in I_{ij}\cap\Z} \min\left\{\frac{X_{ij}}{m},||\frac{a_{ij}}{r_{ij}}b_{ij}||^{-1}\right\}\ll \frac{X^{2(1+2\delta)}}{rm} + r_2\,\log X.$$
Raising it to the power $9$ and taking square root give
\[ S_\B(\alpha; m, B_0) \ll \frac{X^{9(1 + 2 \delta)}}{r^{9/2} m^{9/2}} + r_2^{9/2} \, \log^{9/2} X. \]
Finally, integrating over the minor arc gives
\begin{eqnarray}
\nonumber\int_{\km} S_\B(\alpha; m, B_0)
    &\ll& \nonumber\sum_{r_1 < r \leq r_2} \sum_{\substack{0 \leq a < r \\ (a, r) = 1}} \int_{|\theta| \leq \frac{1}{r r_2}} \left(\frac{X^{9(1 + 2 \delta)}}{r^{9/2} m^{9/2}} + r_2^{9/2} \, \log^{9/2} X\right) d\theta \\
    &\ll& \nonumber\sum_{r_1 < r \leq r_2}\left(\frac{X^{9(1 + 2 \delta)}}{r_2r^{9/2} m^{9/2}}+r_2^{7/2}\log^{9/2} X\right)\\
    &\ll& \frac{X^{9(1 + 2 \delta)}}{r_2r_1^{7/2} m^{9/2}} + r_2^{9/2}\log^{9/2} X,\label{eq:minorIntegral}
\end{eqnarray}
where the last bound follows from $r_2\gg X^{1+2\delta}.$

\subsubsection{Proof of Theorem 4.2}

We are ready to prove Theorem \ref{thm:circlep}.
By \eqref{eq:majorarc} and \eqref{eq:minorIntegral}, we have \[ N_q(\B;m,B_0) = \kS_m(q) \kS_\infty(\B; q) + O\left(\frac{X^7}{r_1^{5/2} m^8} + \frac{r_1 r_2^{7/2}}{m^{9/2}} + \frac{r_1^2 X^{8(1 + 2 \delta)}}{r_2 m^8} + \frac{X^{9(1 + 2 \delta)}}{r_2r_1^{7/2} m^{9/2}} + r_2^{9/2} \log^{9/2} X\right), \]
    where $\kS_m(q)$ is defined in \eqref{eq:Smq}.
Since $X\gg m^3$ and $\delta < 0.01$, we may pick
$$r_1 = X^{\frac{2}{11}(1 + 2 \delta)} m^{\frac{7}{11}},\qquad r_2 = \frac{X^{1.522 (1 + 2 \delta)}}{m^{1.223} \log X}$$ so that $r_1m\ll X^{1-2\delta}$ and $X^{1+2\delta}\ll r_2\ll X^2$. With this choice of $r_1$ and $r_2$, we have \[ N_q(\B;m,B_0) = \kS_m(q) \kS_\infty(\B; q) + O\left(\frac{X^{6.85 (1 + 2 \delta)}}{m^{5.5}} \log X\right). \]
Finally, since $C_q$ is multiplicative (as easily verified), the singular series $\kS_m(q)$ defined in \eqref{eq:Smq} equals
\[ \kS_m(q) = \frac{1}{m^8} \left(\prod_{p \mid m} \kS(q; p)^{-1}\right) \kS(q). \]
This completes the proof of Theorem \ref{thm:circlep}.

\subsection{Proof of Theorem 4.1 using the Selberg sieve}

For any prime $p$, we say an element $B\in V(\F_p)$ (or $W(\F_p)$) is $\F_p$-\emph{reducible} if either $\Delta(B) = 0\in\F_p$ or $\Delta(B)\neq 0$ and $B$ is $\F_p$-distinguished in the sense of Section \ref{sec:invar}. Then any element $B\in V(\Z)$ that is $\Q$-distinguished is $\F_p$-reducible for all primes $p$.

We now apply the Selberg sieve to prove Theorem \ref{thm:circlemain}.
We follow the setup as in \cite[\S3]{ST}.
Let $z$ be a number less than $X^{1/3}$.
Let $P$ be the product of all primes $p$ with $N \leq p < z$ where $N$ is some large enough absolute constant.
For each $m \mid P$, let $a_m$ be the number of elements $B \in \B \cap V(\Z)$ such that:
\begin{itemize}
    \item $q(B) = 0$;
    \item for any prime $p \mid \frac{P}{m}$, $B$ is $\F_p$-reducible;
    \item for any prime $p \mid m$, $B$ is not $\F_p$-reducible.
\end{itemize}
For $m \nmid P$, we set $a_m = 0$.
Then, applying the Selberg sieve will give us the count for
\[ a_1 = \sum_{\gcd(n, P) = 1} a_n, \]
    which is the number of elements $B \in \B \cap V(\Z)$ with $q(B)=0$ and is $\F_p$-reducible for all primes $p \mid P$.

For any squarefree $m \mid P$, the expression \[ \sum_{n \equiv 0 \bmod{m}} a_n \] counts the number of elements $B \in \B \cap V(\Z)$ such that $q(B) = 0$ and $B$ is not $\F_p$-reducible for any $p \mid m$. Recall that for any prime $p$, we defined $d_p$ in Proposition \ref{prop:F_p-dist} for the number of $B_0\in W(\F_p)$ with $f_{B_0}\in U(\F_p)$ and are not $\F_p$-distinguished, which is the same as the number of $B_0\in V(\F_p)$ with $q(B_0)=0$ and are not $\F_p$-reducible. The condition that $\Delta(B_0)\neq 0$ in $\F_p$ also implies that $B_0$ is nonzero modulo $p$.
Thus, by Proposition \ref{prop:F_p-dist} and Theorem \ref{thm:circlep}, we have
\begin{eqnarray*}
    \sum_{n \equiv 0 \bmod{m}} a_n
    &=& \frac{1}{m^8} \prod_{p \mid m} \Big(d_p \kS(q; p)^{-1}\Big) \kS(q) \kS_\infty(\B; q) + O\left(X^{6.85(1 + 2\delta)} m^{2.5} \log X\right) \\
    &=& \left(\prod_{p \mid m} \frac{d_p \kS(q; p)^{-1}}{p^8}\right) \kS(q) \kS_\infty(\B; q) + O\left(X^{6.85(1 + 2\delta)} m^{2.5} \log X\right).
\end{eqnarray*}
We set $\displaystyle g(m) = \prod_{p \mid m} g(p)$ and $u_m = O\left(X^{6.85(1 + 2\delta)} m^{2.5} \log X\right)$ for each squarefree $m \mid P$ where 
\vspace{-15pt}$$g(p) = \frac{d_p \kS(q; p)^{-1}}{p^8}$$ for each prime $p \mid P$.
By \eqref{eq:Cqbound}, we have
\[ \kS(q; p) = 1+O\Big( \sum_{\ell \geq 1} p^{-7 \ell/2}\Big) = 1 + O(p^{-7/2}).\]
Recall from Proposition \ref{prop:F_p-dist}, we have the bound $$\frac{1}{16} + O(p^{-1})\, \leq\, \frac{d_p}{p^8}\, \leq\, \frac{3}{4} + O(p^{-1}).$$ Hence, by taking $N$ large enough, we have the bound $\displaystyle\frac{1}{32} \leq g(p) \leq \frac{7}{8}$ for $p\geq N$.

Now, set $\displaystyle h(m) = \prod_{p \mid m} \frac{g(p)}{1 - g(p)}$ for all squarefree $m \mid P$.
Let $D > 1$ with $D<z$ be a real number to be picked later and set
\[ H = \sum_{\substack{m < \sqrt{D} \\ m \mid P}} h(m). \]
Then, by \cite[Theorem 6.4]{IK}, we have
\[ a_1 = \sum_{\gcd(n, P) = 1} a_n \leq H^{-1} \kS(q) \kS_\infty(\B; q) + R, \]
    where
\[ |R| \leq \sum_{\substack{m < \sqrt{D} \\ m \mid P}} \tau_3(m) u_m \ll_\epsilon X^{6.85(1 + 2\delta)} \log X \sum_{m < \sqrt{D}} m^{2.5 + \epsilon} \ll_\epsilon X^{6.85(1 + 2\delta)} D^{1.75 + \epsilon} \log X \]
    for any $\epsilon > 0$.
    
Meanwhile, for $p$ prime, we have $\frac{1}{31} \leq h(p) \leq 8$ and so for any $\epsilon > 0$,
\[ H \gg \pi(\sqrt{D}) \gg_\epsilon D^{0.5 - \epsilon}. \]
Thus, we get
\[ N_q^\dist(\B) \leq a_1 \ll_\epsilon X^7 D^{-0.5 + \epsilon} + X^{6.85(1 + 2\delta)} D^{1.75 + \epsilon} \log X. \]
Taking $D = X^{(1/15) - (54.8/9)\delta}$ gives the desired bound \eqref{eq:circlemain}.

\section{Proof of Theorem \ref{thm:main} and Theorem \ref{thm:tailestimate}}\label{sec:proof}

We prove Theorem \ref{thm:tailestimate} first. By the paragraph following Theorem \ref{thm:tailestimate}, it is enough to consider the case $\alpha = 256$ and $\beta=-27$. We note that for $(a,b)\in\Z^2$ with $H(a,b)<X$, there are at most $X^\epsilon$ integers $m$ whose square divides $\Delta(a,b)$. Hence, it is enough to prove the bound \eqref{eq:final} for $$\#\bigcup_{\substack{m>M\\ m\;\mathrm{ squarefree}}} \{(a,b)\in\Z^2\colon H(a,b)<X, m^2\mid \Delta(a,b)\}.$$
Moreover, if $m^2\mid\Delta(a,b)$, then we can factor $m = m_1m_2$ where $m_1$ is the product of all prime factors $p$ of $m$ such that $p^2$ strongly divides $\Delta(a,b)$, and $m_2$ is the product of all prime factors $p$ of $m$ such that $p^2$ weakly divides $\Delta(a,b)$. Since at least one of $m_1$ or $m_2$ is at least $m'$ for some squarefree integer $m'\geq\sqrt{m}$, we have
$$\bigcup_{\substack{m>M\\ m\;\mathrm{ squarefree}}}\{(a,b)\in\Z^2\colon H(a,b)<X, m^2\mid \Delta(a,b)\}\quad\subset \bigcup_{\substack{m'>\sqrt{M}\\ m'\;\mathrm{ squarefree}}}\W_{m'}^{(1)}\quad \cup \bigcup_{\substack{m'>\sqrt{M}\\ m'\;\mathrm{ squarefree}}}\W_{m'}^{(2)}.$$
Theorem \ref{thm:tailestimate} now follows from Theorem \ref{thm:weak}.

\vspace{5pt}
Finally, we prove Theorem \ref{thm:main} using an inclusion-exclusion sieve. We have
$$N(X;\alpha,\beta)=\sum_m\mu(m)N_m(X;\alpha,\beta),$$
and the following individual count
$$
N_m(X;\alpha,\beta) = X^7m^{-4}\rho_{\alpha,\beta}(m^2) + O(X^4m^{-2}\rho_{\alpha,\beta}(m^2)) + O(\rho_{\alpha,\beta}(m^2)).
$$
Since $\rho_{\alpha,\beta}(m^2) = O(m^2)$, we sum over $m<X^\eta$ for some $\eta>0$ to get
\begin{eqnarray}
\nonumber\sum_{m\leq X^\eta}\mu(m)N_m(X;\alpha,\beta) &=& X^7\sum_{m\leq X^\eta} \frac{\rho_{\alpha,\beta}(m^2)}{m^4} + O(X^{4+\eta}) + O(X^{1+3\eta})\\
&=& C(\alpha,\beta)X^7 + O(X^{7-\eta}) + O(X^{4+\eta}) + O(X^{1+3\eta}).\label{eq:mainterm}
\end{eqnarray}
We take $\eta = 0.1$ and apply Theorem \ref{thm:tailestimate} with $M = X^{0.1}$ to get
$$N(X;\alpha,\beta) = C(\alpha,\beta)X^7 + O(X^{6.9}) + O_\epsilon(X^{6.9+\epsilon} + X^{6.992+\epsilon}).$$
The proof of Theorem \ref{thm:main} is now complete.

\subsection*{Acknowledgments}
It is a pleasure to thank Manjul Bhargava for many helpful comments. The first named author is supported by the University of Waterloo through an MURA project. The second named author is
supported by an NSERC Discovery Grant.

\end{document}